\documentclass[a4paper,12pt,reqno]{amsart} 

\makeatletter
\tagsleft@false 
\makeatother

\usepackage{xcolor}

\usepackage{comment}

\usepackage{tikz}
\usepackage{pgfplots}
\pgfplotsset{compat=1.18}

\usepackage{amsmath}
\usepackage{amssymb}
\usepackage{mathrsfs}
\usepackage{ifthen}
\usepackage{graphicx}
\usepackage[T1]{fontenc}

\numberwithin{equation}{section}

\newtheorem*{theorem*}{Theorem}
\newtheorem{thm}{Theorem}[section]
\newtheorem{cor}[equation]{Corollary}
\newtheorem{lem}[equation]{Lemma}

\theoremstyle{definition}

\newtheorem{prob}[equation]{Problem}

\newcounter{minutes}
\divide\time by 60
\newcounter{hours}
\multiply\time by 60
\addtocounter{minutes}{-\time}

\newcounter{own}
\def\theown{\thesection.\arabic{own}}

\newenvironment{pf}[1][]{%
	\vskip 3mm
	\noindent
	\ifthenelse{\equal{#1}{}}%
	{{\slshape Proof. }}%
	{{\slshape #1.} }%
}%
{\qed\bigskip}

\newcounter{alphabet}

\def\be{\begin{equation}}
	\def\ee{\end{equation}}

\newcommand{\bee}{\begin{enumerate}}
	\newcommand{\eee}{\end{enumerate}}

\newcommand{\blem}{\begin{lem}}
	\newcommand{\elem}{\end{lem}}
\newcommand{\bthm}{\begin{thm}}
	\newcommand{\ethm}{\end{thm}}
\newcommand{\bcor}{\begin{cor}}
	\newcommand{\ecor}{\end{cor}}
\newcommand{\beg}{\begin{examp}}    
	\newcommand{\eeg}{\end{examp}}
\newcommand{\begs}{\begin{examples}} 
	\newcommand{\eegs}{\end{examples}}
\newcommand{\bdefe}{\begin{defin}}   
	\newcommand{\edefe}{\end{defin}}
\newcommand{\bprob}{\begin{prob}}
	\newcommand{\eprob}{\end{prob}}
\newcommand{\bei}{\begin{itemize}}
	\newcommand{\eei}{\end{itemize}}

\allowdisplaybreaks

\usepackage{verbatim}
\begin{document}
	
	\title{{Sharp Estimates of Hankel Determinants for certain  classes of convex univalent functions
	}}
	
	\author{Vasudevarao Allu}
	\address{Vasudevarao Allu,
		Department of Mathematics,
		School of Basic Sciences,
		Indian Institute of Technology Bhubaneswar,
		Bhubaneswar-752050, Odisha, India.}
	\email{avrao@iitbbs.ac.in}
	
	\author{Shobhit Kumar}
	\address{Shobhit Kumar,
		Department of Mathematics,
		School of Basic Sciences,
		Indian Institute of Technology Bhubaneswar,
		Bhubaneswar-752050, Odisha, India.}
	\email{a21ma09007@iitbbs.ac.in}
	
	\subjclass[2020]{30C45, 30C50, 41A10}
	
	\keywords{ Analytic functions, Subordination,  Hankel determinants, Convex function, Bernstein Polynomial. }
	
	\def\thefootnote{}
	\footnotetext{
		{ }
	}
	\makeatletter\def\thefootnote{\@arabic\c@footnote}\makeatother
	\begin{abstract}
		Let  $\mathcal{A}$ denote the class of  analytic functions $f$ such that $f(0)=0$ and $f'(0)=1$ in the unit disk $\mathbb{D}:=\{z \in \mathbb{C}: |z|<1\}.$
		We examine the properties of the class $\mathcal{C}(\varphi)$ defined as $\mathcal{C}(\varphi) := \left\{ f \in \mathcal{A} : 1+zf''(z)/f'(z) \prec \varphi(z):=1+z+ m/n\, \,  z^2, \text{ with } 2m \le n,\text{ for } m,  n \in \mathbb{N} \right\},$ and compute the sharp second and third Hankel determinants for the functions in $\mathcal{C}(\varphi)$. Furthermore, we determine the extremal functions for the sharp estimates of the Hankel determinants.
	\end{abstract}
	\maketitle
	\markboth{V. Allu and S. Kumar}{Convex Ma--Minda subclass of $\varphi(z)$}
	\setcounter{page}{1}
	
	\section{Introduction}
	
	Let $\mathcal{H}$ denote the class of functions analytic in the open unit disk
	$
	\mathbb{D}:=\{z\in\mathbb{C}:|z|<1\}.
	$
	We write $\mathcal{A}$ for the normalized subclass of $\mathcal{H}$ consisting of functions $f$ satisfying
	$
	f(0)=0
	$
	and
	$
	f'(0)=1.
	$
	Accordingly, each function $f\in\mathcal{A}$ admits the expansion
	\[
	f(z)=z+\sum_{n=2}^{\infty}a_n z^n, \qquad z\in\mathbb{D}.
	\]
	Let $\mathcal{S}$ denote the family of all univalent functions in $\mathcal{A}$. This class occupies a central position in geometric function theory, since several major subclasses, including the starlike and convex families, are defined through geometric properties of the image domain.
	
	A domain $\Omega\subset\mathbb{C}$ is called starlike with respect to a point $\zeta_{0}\in \Omega$ if, for every $\zeta\in \Omega$, the whole line segment joining $\zeta_{0}$ to $\zeta$ is contained in $\Omega$. That is, equivalently,
	\[
	(1-s)\zeta_{0}+s\zeta \in \Omega
	\qquad \text{for all } \zeta\in \Omega \text{ and all } s\in[0,1].
	\]
	A function $f\in\mathcal{S}$ is said to be in the class $\mathcal{S}^*$ whenever the image domain $f(\mathbb{D})$ is starlike with respect to the origin. In analytic terms, this is equivalent to 
	\[
	f\in\mathcal{S}^*
	\quad \text{if, and only if,} \quad
	\operatorname{Re}\left(\frac{\zeta f'(\zeta)}{f(\zeta)}\right)>0,
	\qquad \zeta\in\mathbb{D}.
	\]
	A domain $\Omega\subset\mathbb{C}$ is said to be convex if, whenever $\zeta_{1},\zeta_{2}\in \Omega$, the entire line segment connecting these two points remains inside $\Omega$. In other words,
	\[
	(1-s)\zeta_{1}+s\zeta_{2}\in \Omega
	\qquad \text{for all } \zeta_{1},\zeta_{2}\in \Omega \text{ and all } s\in[0,1].
	\]
	A function $f\in\mathcal{S}$ is called convex in $\mathbb{D}$ whenever the image domain $f(\mathbb{D})$ is convex. This class is denoted by $\mathcal{C}$, and it admits the analytic characterization
	\[
	f\in\mathcal{C}
	\quad \text{if, and only if,} \quad
	\operatorname{Re}\left(1+\frac{\zeta f''(\zeta)}{f'(\zeta)}\right)>0,
	\qquad \zeta\in\mathbb{D}.
	\]
	The classes $\mathcal{S}^*$ and $\mathcal{C}$ have been studied intensively and form a basic part of the modern theory of univalent functions; see Goodman \cite{Goodman1983}.
	
	An important unifying generalization of these classes was formulated by Ma and Minda \cite{MaMinda1992}. Let $\varphi$ be analytic in $\mathbb{D}$ and satisfy
	$
	\operatorname{Re}\varphi(\zeta)>0$
	for $\zeta\in\mathbb{D},$
	together with the normalization conditions
	$
	\varphi(0)=1, 
	\varphi'(0)>0.
	$
	Assume also that the image region $\varphi(\mathbb{D})$ is symmetric with respect to the real axis and starlike with respect to the point $1$. Then the corresponding Ma--Minda  convex class is given by
	\[
	\mathcal{C}(\varphi):=
	\left\{
	f\in\mathcal{A}:
	1+\frac{\zeta f''(\zeta)}{f'(\zeta)}\prec \varphi(\zeta),
	\ \zeta\in\mathbb{D}
	\right\}.
	\]
	This formulation provides a common setting for many previously studied subclasses and has proved useful in coefficient questions, extremal problems, inclusion relations, and radius problems; (see \cite{MaMinda1992}).
	
	Coefficient functionals also form an important part of the theory of analytic and univalent functions. Among them, Hankel determinants, introduced by Pommerenke \cite{Pommerenke1966}, have been studied extensively for many subclasses of analytic functions; for example, (see \cite{KwonLeckoSim2018, LiberaZlotkiewicz1982,ProkhorovSzynal1981}). In the present work, we investigate the second and third Hankel determinants for the class $\mathcal{C}(\varphi)$, and their precise definitions together with the required auxiliary results are recalled in Section~3.
	
	In the present paper we study the Ma--Minda convex class determined by
	$
	\varphi(z)=1+z+m/n \, \, z^2,
	$
	where $m,n\in\mathbb{N}$ and $2m\le n$. After establishing that this choice of $\varphi$ satisfies the required Ma--Minda conditions under the assumption $2m\le n$, we derive sharp estimates for the second Hankel determinant $H_2(2)$ and the third Hankel determinant $H_3(1)$ for functions in $\mathcal{C}(\varphi)$. We also identify the corresponding extremal functions.

	\section{Certain properties of the function $\varphi(z)$}
	First, we show that the function satisfies all the properties of a Ma-Minda function. It is easy to see that $\varphi(0)=1$ and $\varphi'(0)=1.$ Furthermore,  $\varphi(z)$ is symmetric with respect to the real axis.\\[2mm]
	Next, we show that $\varphi(z)$ is univalent in the unit disk $\mathbb{D}$ whenever $2m \le n.$
	Taking, 
	$
	a={m}/{n}>0,
	$
	then
	\[
	\varphi(z)=1+z+az^2.
	\]
	To determine when \(\varphi\) is univalent in the unit disk \(\mathbb D\). Let \(z_1,z_2\in\mathbb D\). We compute
	\[
	\varphi(z_1)-\varphi(z_2)
	=(z_1-z_2)+a(z_1^2-z_2^2)
	=(z_1-z_2)\bigl(1+a(z_1+z_2)\bigr).
	\]
	If $\varphi(z_1)=\varphi(z_2)$ and $z_1\ne z_2$, then, we have 
	$
	1+a(z_1+z_2)=0.
	$
	Thus, we have 
	\[
	1+a(z_1+z_2)=0,
	\]
	that is,
	\[
	z_1+z_2=-\frac1a.
	\]
	Thus \(\varphi\) fails to be univalent precisely when there exist distinct points
	\(z_1,z_2\in\mathbb D\) such that
	$
	z_1+z_2=-1/a.
	$
	Now, for \(z_1,z_2\in\mathbb D\), we have
	\[
	|z_1+z_2|\le |z_1|+|z_2|<2.
	\]
	Hence, if such \(z_1,z_2\) exist, then necessarily
	\[
	\left|-\frac1a\right|<2,
	\]
	or equivalently,
	$
	1/a<2,
	$
	which implies
	$
	a>1/2.
	$
	Therefore,  if \(a\le 1/2\), then no such pair \(z_1,z_2\in\mathbb D\) can exist, and therefore
	\(\varphi\) is univalent in \(\mathbb D\).
	Conversely, suppose \(a>1/2\). Then
	\[
	\frac1a<2,
	\qquad\text{so}\qquad
	\left|-\frac1a\right|<2.
	\]
	Let
	$
	c=-{1}/{2a}.
	$
	Then \(|c|<1\). Choose \(\varepsilon>0\) small enough so that
	\[
	z_1=c+\varepsilon,
	\qquad
	z_2=c-\varepsilon
	\]
	both belong to \(\mathbb D\). Clearly \(z_1\ne z_2\), and
	\[
	z_1+z_2=2c=-\frac1a.
	\]
	Hence
	\[
	1+a(z_1+z_2)=1+a\left(-\frac1a\right)=0,
	\]
	and therefore
	\[
	\varphi(z_1)=\varphi(z_2).
	\]
	So \(\varphi\) is not univalent in \(\mathbb D\).
	We conclude that \(\varphi\) is univalent in \(\mathbb D\) if, and only if, 
	$
	a\le 1/2.
	$
	Since \(a={m}/{n}\), this is equivalent to
	$
	{m}/{n}\le 1/2,
	$
	that is,
	$
	2m\le n.
	$\\[2mm]
	Next, we prove that the function $\varphi(z)$ is starlike with respect to the point $\varphi(0)=1$ given that $2m \le n.$
	Taking, 
	$
	a={m}/{n}>0.
	$
	then, we may write $\varphi(z)$ as:
	\[
	\varphi(z)=1+z+az^2.
	\]
	The function \(\varphi\) is starlike with respect to \(\varphi(0)=1\) if, and only if, 
	\[
	g(z):=\varphi(z)-1=z+az^2
	\]
	is starlike with respect to the origin.
	Now,
	\[
	g'(z)=1+2az,
	\]
	and therefore
	\begin{align*}
		\frac{z g'(z)}{g(z)}
		=\frac{z(1+2az)}{z(1+az)}
		=\frac{1+2az}{1+az}.
	\end{align*}
	By the standard characterization of starlike functions, \(g\) is starlike in \(\mathbb D\) if, and only if, 
	\[
	\operatorname{Re}\left(\frac{z g'(z)}{g(z)}\right)>0,
	\qquad z\in\mathbb D,
	\]
	that is,
	\[
	\operatorname{Re}\left(\frac{1+2az}{1+az}\right)>0,
	\qquad z\in\mathbb D.
	\]
	First, we  suppose that
	$
	a\le 1/2,
	$
	let $w=az$. Then \(|w|<a\le 1/2\), and
	\[
	\frac{1+2az}{1+az}
	=\frac{1+2w}{1+w}
	=1+\frac{w}{1+w}.
	\]
	Also,
	\[
	\left|\frac{w}{1+w}\right|
	\le \frac{|w|}{1-|w|}
	<1.
	\]
	Hence
	\[
	\operatorname{Re}\left(1+\frac{w}{1+w}\right)>0,
	\]
	which gives
	\[
	\operatorname{Re}\left(\frac{1+2az}{1+az}\right)>0,
	\qquad z\in\mathbb D.
	\]
	Thus \(g\) is starlike in \(\mathbb D\), and so \(\varphi\) is starlike with respect to \(1\).
	Conversely, suppose that
	$
	a>1/2.
	$
	Choose \(r\) such that
	\[
	\frac{1}{2a}<r<\min\left\{1,\frac{1}{a}\right\},
	\]
	and set \(z=-r\). Then \(z\in\mathbb D\), and
	\[
	\frac{1+2az}{1+az}
	=\frac{1-2ar}{1-ar}.
	\]
	Since \(r>{1}/{2a}\), we have \(1-2ar<0\), and since \(r<1/a\), we have \(1-ar>0\). Therefore
	\[
	\frac{1-2ar}{1-ar}<0.
	\]
	Hence,
	\[
	\operatorname{Re}\left(\frac{1+2az}{1+az}\right)<0
	\]
	for this choice of \(z\), so \(g\) is not starlike in \(\mathbb D\). Consequently, \(\varphi\) is not starlike with respect to \(1\).
	Therefore \(\varphi\) is starlike with respect to \(\varphi(0)=1\) if, and only if, 
	$
	a\le 1/2.
	$
	Since \(a={m}/{n}\), we get $2m \le n.$\\[2mm]
	Now, we prove that the $\operatorname{Re}\varphi(z)$ remains positive in the unit disk $\mathbb{D}.$
	Again taking 
	$
	a={m}/{n}>0.
	$
	Then
	$
	\varphi(z)=1+z+az^2.
	$
	We want to determine for which \(a>0\), we have 
	$
	\operatorname{Re} \varphi(z)>0
	$ in $\mathbb D.$\\
	Consider the harmonic function
	$
	u(z)=\operatorname{Re} \varphi(z).
	$
	Since \(u\) is harmonic in \(\mathbb D\) and continuous on \(\overline{\mathbb D}\), it is enough to study its boundary values on \(|z|=1\).
	Let \(z=e^{it}\), where \(t\in\mathbb R\). Then
	\[
	u(e^{it})=\operatorname{Re}\, \bigl(1+e^{it}+ae^{2it}\bigr)
	=1+\cos t+a\cos 2t.
	\]
	Taking 
	$
	x=\cos t\in[-1,1],
	$
	we obtain
	\[
	u(e^{it})=1+x+a(2x^2-1)=2ax^2+x+1-a.
	\]
	Thus it suffices to determine when the quadratic polynomial
	\[
	q(x)=2ax^2+x+1-a
	\]
	is non-negative for all \(x\in[-1,1]\).
	Now, taking the derivative of $q(x)$ with respect to $x,$
	\[
	q'(x)=4ax+1.
	\]
	Hence the critical point is
	$
	x_0=-{1}/{4a}.
	$
	Thus, we discuss the following two  cases.\\[2mm]
	Case 1:  If \(0<a\le 1/4\), then
	\[
	x_0=-{1}/{4a}\le -1.
	\]
	Since \(q(x)\) is convex, its minimum on \([-1,1]\) occurs at \(x=-1\). Therefore
	\[
	\min_{x\in[-1,1]} q(x)=q(-1)=2a-1+1-a=a>0.
	\]
	Hence
	$
	u(e^{it})>0$ for 
	$|z|=1$.\\[2mm]
	Case 2:  Now suppose \(a>1/4\), then
	\[
	x_0=-\frac{1}{4a}\in(-1,0),
	\]
	so the minimum of \(q\) on \([-1,1]\) occurs at \(x=x_0\). A simple computation gives
	\[
	q\!\left(-\frac{1}{4a}\right)
	=2a\left(\frac{1}{16a^2}\right)-\frac{1}{4a}+1-a
	=1-a-\frac{1}{8a}.
	\]
	Thus \(q(x)\ge 0\) on \([-1,1]\) if, and only if, 
	\[
	1-a-\frac{1}{8a}\ge 0.
	\]
	Since \(a>0\), this is equivalent to
	$
	8a-8a^2-1\ge 0,
	$
	The roots of the quadratic equation
	$
	8a^2-8a+1=0
	$
	are
	\[
	a=\frac{2\pm\sqrt2}{4}.
	\]
	Therefore,
	$
	8a^2-8a+1\le 0, 
	$
	Thus, we have
	\[
	\frac{2-\sqrt2}{4}\le a\le \frac{2+\sqrt2}{4}.
	\]
	Since in the present case \(a>1/4\), only the upper bound is relevant, and we obtain
	\[
	a\le \frac{2+\sqrt2}{4}.
	\]
	Combining the two cases, we conclude that
	$
	u(e^{it})\ge 0$  for $ |z|=1
	$
	if, and only if, 
	\[
	0<a\le \frac{2+\sqrt2}{4}.
	\]
	Because \(u\) is harmonic and not identically zero, the boundary condition \(u\ge 0\) implies
	$
	u(z)>0 
	$
	in $\mathbb{D}.$
	Hence,
	$
	\operatorname{Re} \varphi(z)>0
	$
	if, and only if, 
	\[
	a\le \frac{2+\sqrt2}{4}.
	\]
	Since \(a={m}/{n}\), this is equivalent to
	\[
	\frac{m}{n}\le \frac{2+\sqrt2}{4}.
	\]
	Therefore, from the preceding observations, we conclude that \(\varphi\) satisfies all the Ma--Minda conditions whenever
	$
	2m\le n,$
	for $m,n\in\mathbb N$. Hence, we discuss the properties of the class $\mathcal{C}(\varphi),$ when $2m \le n,$ where $m,n \in \mathbb{N}.$

	\section{Hankel determinants}
	
	The notion of Hankel determinants for functions in the class $\mathcal{S}$ was introduced by Pommerenke \cite{Pommerenke1966}. If \(f\in\mathcal{A}\) has the Taylor expansion
	\[
	f(z)=z+\sum_{n=2}^{\infty}a_n z^n,
	\]
	then the \(q\)th Hankel determinant of \(f\) is given by
	\begin{equation}\label{eq:3.1}
		H_q(n)=
		\begin{vmatrix}
			a_n & a_{n+1} & \cdots & a_{n+q-1} \\
			a_{n+1} & a_{n+2} & \cdots & a_{n+q} \\
			\vdots & \vdots & \ddots & \vdots \\
			a_{n+q-1} & a_{n+q} & \cdots & a_{n+2q-2}
		\end{vmatrix},
		\qquad q,n\in\mathbb{N}.
	\end{equation}
	For particular choices of $n$ and $q$, this determinant reduces to several well-known coefficient functionals. For example, the second Hankel determinant is defined as 
	\begin{align}\label{eq:3.2}
		H_2(2)=a_2a_4-a_3^{\,2}.
	\end{align}
	Since functions in the class  $\mathcal{A}$ are normalized, we adopt the convention $a_1:=1$. Thus the third Hankel determinant is
	\begin{equation}\label{eq:3.3}
		H_3(1)=
		a_3(a_2a_4-a_3^2)-a_4(a_4-a_2a_3)+a_5(a_3-a_2^2).
	\end{equation}
	Let $\mathcal{P}$ denote the class of analytic functions $p$ in $\mathbb{D}$ of the form
	\begin{equation}\label{eq:3.4}
		p(z)=1+\sum_{n=1}^{\infty} p_n z^n
	\end{equation}
	with $\operatorname{Re} p(z)>0$ in $\mathbb{D}$. This class is known as Carath\'eodory class.
	We recall the following well-known result of Choi \textit{et al.} \cite{ChoiKimSugawa2007}, which is essential for the proof of our main theorem.
	
	\begin{lem}\label{lemma2} {\cite{ChoiKimSugawa2007}}
		Let $A,B,C\in\mathbb R$ and define
		\[
		Y(A,B,C):=\max_{z\in\overline{\mathbb D}}\Bigl(|A+Bz+Cz^{2}|+1-|z|^{2}\Bigr).
		\]
		\begin{itemize}
			\item[(i)] If $AC\ge 0$, then
			\[
			Y(A,B,C)=
			\begin{cases}
				|A|+|B|+|C|, & \text{if } |B|\ge 2(1-|C|),\\[6pt]
				1+|A|+\dfrac{B^{2}}{4(1-|C|)}, & \text{if } |B|<2(1-|C|).
			\end{cases}
			\]
			\item[(ii)] If $AC<0$, then
			\[
			Y(A,B,C)=
			\begin{cases}
				1-|A|+\dfrac{B^{2}}{4(1-|C|)}, & \text{if } -4AC(C^{2}-1)\le B^{2}\ \text{and } |B|<2(1-|C|),\\[8pt]
				1+|A|+\dfrac{B^{2}}{4(1+|C|)}, & \text{if } B^{2}<\min\!\bigl\{\,4(1+|C|)^{2},-4AC(C^{2}-1)\bigr\},\\[8pt]
				R(A,B,C), & \text{otherwise,}
			\end{cases}
			\]
			where
			\[
			R(A,B,C)=
			\begin{cases}
				|A|+|B|+|C|, & \text{if } |C|\bigl(|B|+4|A|\bigr)\le |AB|,\\[6pt]
				-|A|+|B|+|C|, & \text{if } |AB|\le |C|\bigl(|B|-4|A|\bigr),\\[10pt]
				\bigl(|A|+|C|\bigr)\sqrt{\,1-\dfrac{B^{2}}{4AC}\,}, & \text{otherwise.}
			\end{cases}
			\]
		\end{itemize}
	\end{lem}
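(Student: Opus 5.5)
The plan is to reduce the two-variable maximization to an explicit one-variable problem in $r=|z|$. Write $z=re^{i\theta}$ with $0\le r\le 1$. Replacing $z$ by $-z$ (which replaces $B$ by $-B$) and by $\bar z$ does not change the maximum, so we may assume $B\ge 0$ where convenient. For fixed $r$, expand
\[
|A+Bre^{i\theta}+Cr^2e^{2i\theta}|^2=(A-Cr^2)^2+B^2r^2+2Br(A+Cr^2)x+4ACr^2x^2,
\qquad x=\cos\theta\in[-1,1],
\]
using $\cos 2\theta=2x^2-1$. This is a quadratic $Q_r(x)$ in $x$ with leading coefficient $4ACr^2$, so the sign of $AC$ governs the inner maximization. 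Setting $M(r)=\max_{x\in[-1,1]}\sqrt{Q_r(x)}$, we have $Y(A,B,C)=\max_{0\le r\le 1}\bigl(M(r)+1-r^2\bigr)$.

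\emph{Case $AC\ge 0$.} Here $Q_r$ is convex (or linear) in $x$, so its maximum is at $x=\pm1$. This gives $M(r)=\max\{|A+Br+Cr^2|,\,|A-Br+Cr^2|\}=|A|+|B|r+|C|r^2$, since $A$ and $C$ have the same sign. We then maximize $h(r)=|A|+1+|B|r-(1-|C|)r^2$ on $[0,1]$.
\begin{itemize}
\item If $|C|\ge 1$, then $h$ is nondecreasing and the maximum is at $r=1$.
\item Otherwise $h$ is a concave parabola with vertex $r_0=|B|/(2(1-|C|))$. If $r_0\ge1$, i.e.\ $|B|\ge 2(1-|C|)$, the maximum is $h(1)=|A|+|B|+|C|$. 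If $r_0<1$, the maximum is $h(r_0)=1+|A|+B^2/(4(1-|C|))$.
\end{itemize}
This is exactly (i).

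\emph{Case $AC<0$.} Now $Q_r$ is concave in $x$, with vertex $x_r=-B(A+Cr^2)/(4ACr)$. If $|x_r|\ge 1$, the maximum is again at an endpoint and $M(r)=|A+Cr^2|+|B|r$. If $|x_r|<1$, the key identity is $(A+Cr^2)^2-4ACr^2=(A-Cr^2)^2$, which gives
\[
Q_r(x_r)=(A-Cr^2)^2\Bigl(1-\frac{B^2}{4AC}\Bigr),\qquad\text{so}\qquad M(r)=(|A|+|C|r^2)\sqrt{1-\frac{B^2}{4AC}},
\]
because $|A-Cr^2|=|A|+|C|r^2$ when $AC<0$. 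So $M(r)+1-r^2$ is given on each subinterval of $[0,1]$ by an explicit function, and we maximize each piece separately.
\begin{itemize}
\item On the endpoint regime, the function is $|A\pm Cr^2|+|B|r+1-r^2$. Whether the absolute value resolves with $+|C|$ or $-|C|$ depends on whether $A$ and $Cr^2$ cancel. Its interior critical points produce the candidates $1-|A|+B^2/(4(1-|C|))$ and $1+|A|+B^2/(4(1+|C|))$. The side conditions $|B|<2(1-|C|)$ and $B^2<4(1+|C|)^2$ are the requirements that these critical points lie in $(0,1)$.
\item On the interior-vertex regime, the function $(|A|+|C|r^2)\sqrt{1-B^2/(4AC)}+1-r^2$ is a quadratic in $r^2$.
\item The conditions involving $-4AC(C^2-1)$ come from the constraint $|x_r|<1$ at the relevant $r$, which determines which regime is active.
\item When no interior critical point is admissible, the maximum is at $r=1$. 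There one maximizes $Q_1(x)$ over $x\in[-1,1]$, yielding $R(A,B,C)$. The three sub-cases correspond to $x_1$ lying beyond $1$, beyond $-1$, or inside $(-1,1)$; the conditions $|C|(|B|\pm4|A|)$ versus $|AB|$ are exactly $|x_1|\ge 1$ rewritten.
\end{itemize}

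The main obstacle is the bookkeeping in the case $AC<0$. One must track, as $r$ varies, when $|x_r|<1$, which piecewise formula is active, and where the admissible critical points lie, and then compare the resulting candidate maxima to confirm that the stated conditions are exhaustive and select the correct value. I would first determine the monotonicity of $r\mapsto |x_r|$ on $(0,1]$, so that the set where $|x_r|<1$ is an interval. I would then compare the candidate values directly, checking each boundary case against the corresponding conditions in (ii).
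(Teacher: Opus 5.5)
The paper gives no proof of this lemma; it only quotes it from Choi--Kim--Sugawa, so your argument has to stand on its own. Part (i) is complete and correct. Your set-up for (ii) is also sound: the formula for $Q_r$, the identity $(A+Cr^2)^2-4ACr^2=(A-Cr^2)^2$, and the vertex value.

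\textbf{The matching step fails.} You claim the case analysis reproduces the printed conditions ``exactly'', but (ii) as printed is false, and your own formulas show it.
\begin{itemize}
\item At $r=1$ the endpoint values are $|A\pm B+C|$, with maximum $|A+C|+|B|=\bigl||A|-|C|\bigr|+|B|$ since $AC<0$. The condition $|C|(|B|+4|A|)\le|AB|$ is exactly ``$|x_1|\ge 1$ and $|A|>|C|$''. So that branch of $R$ must be $|A|+|B|-|C|$, not $|A|+|B|+|C|$. For $A=2$, $B=10$, $C=-1/10$ the printed value is $12.1$, but $Y=11.9$.
\item Imposing $|x_{r_0}|\ge 1$ at $r_0=|B|/(2(1\pm|C|))$ gives the threshold $B^2=4|A|(1-C^2)/|C|=-4AC(C^{-2}-1)$, not $-4AC(C^{2}-1)$. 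The latter is negative when $|C|<1$. For such $C$ the printed second branch is vacuous, and the printed first branch gives $Y(1,0,-\tfrac12)=0$, although $z=0$ already gives the value $2$.
\end{itemize}
What your method can prove is the corrected statement, with $C^{-2}-1$ and $|A|+|B|-|C|$, which is the form in the original source.

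\textbf{The global comparison in (ii) is missing.} Even for the corrected statement, your (ii) stops at a list of candidates, and the comparison between them is the actual content. Your claim that, without an admissible interior critical point, the maximum sits at $r=1$ is asserted rather than proved. One way to organize the argument:
\begin{itemize}
\item Let $e(r)=\bigl||A|-|C|r^2\bigr|+|B|r+1-r^2$ and $R=M(1)=\max_{|z|=1}|A+Bz+Cz^2|$. Then $M(r)+1-r^2\ge e(r)$, with equality off $V=\{r\in(0,1]:|x_r|<1\}$.
\item Since $|x_r|=|B|\,\bigl||A|-|C|r^2\bigr|/(4|AC|r)$, and $r\mapsto\bigl||A|-|C|r^2\bigr|/r$ decreases and then increases, $V$ is an interval.
\item On $V$ the function $(|A|+|C|r^2)\sqrt{1-B^2/(4AC)}+1-r^2$ is affine in $r^2$, not quadratic. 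Its supremum over $V$ is therefore at an end of $V$, where it equals $e$ unless that end is $r=1\in V$. Hence $Y=\max\{\max_{[0,1]}e,\;R\}$.
\item This makes the ``otherwise'' case immediate. If $e$ is maximized at $r=1$, or at a point of $V$ (where $M(r)+1-r^2>e(r)$ by strict concavity in $x$), then $Y=R$.
\end{itemize}
What remains is to show $e(r_0)\ge R$ when the maximizer $r_0<1$ of $e$ lies outside $V$. This is trivial if $|x_1|\ge 1$, since then $R=e(1)$. When $|x_1|<1$ it is a genuine inequality. One side is $1-|A|+\frac{B^2}{4(1-|C|)}$ or $1+|A|+\frac{B^2}{4(1+|C|)}$, and the other is $(|A|+|C|)\sqrt{1-B^2/(4AC)}$. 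This is where the thresholds in (ii) have to be verified.
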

	
	We shall also use the following coefficient representation of Schwarz function.  The next well-known lemma by Prokhorov and Szynal \cite{ProkhorovSzynal1981} will be crucial for our result.
	
	\begin{lem}\label{lemma3}
		Let
		$
		w(z)=\sum_{n=1}^{\infty}c_n z^n
		$
		be a Schwarz function, that is, $w$ is analytic in $\mathbb D$, $w(0)=0$, and $|w(z)|<1$ for all $z\in\mathbb D$. If $c_1\ge 0$, then there exist complex numbers $\gamma,\eta,\rho$ with
		$
		|\gamma|\le 1,\ |\eta|\le 1,\ |\rho|\le 1,
		$
		such that
		\begin{align*}
			c_2 &= (1-c_1^2)\gamma,\\[2mm]
			c_3 &= (1-c_1^2)\bigl(\eta(1-|\gamma|^2)-c_1\gamma^2\bigr),\\[2mm]
			c_4 &= (1-c_1^2)\Bigl(
			c_1^2\gamma^3
			-(1-|\gamma|^2)\bigl(2c_1\gamma\eta+\overline{\gamma}\eta^2\bigr)
			+(1-|\gamma|^2)(1-|\eta|^2)\rho
			\Bigr).
		\end{align*}
	\end{lem}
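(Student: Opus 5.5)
The plan is to use the Schur algorithm for bounded analytic functions. It produces the three parameters one at a time as the initial values of successive Schur transforms, after which the formulas follow by comparing Taylor coefficients.

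\textbf{Setting up the Schur parameters.} Put $\omega_0(z)=w(z)/z$. By the Schwarz lemma $\omega_0$ is analytic in $\mathbb D$ with $|\omega_0|\le 1$ and $\omega_0(0)=c_1\in[0,1]$.

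First suppose $c_1<1$. The Schur transform
\[
\omega_1(z)=\frac{\omega_0(z)-c_1}{z\,(1-c_1\omega_0(z))}
\]
is analytic in $\mathbb D$ and again satisfies $|\omega_1|\le1$, because $\zeta\mapsto(\zeta-c_1)/(1-c_1\zeta)$ is a disc automorphism and the Schwarz lemma applies. Set $\gamma:=\omega_1(0)$.

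If $|\gamma|<1$, repeat the step: $\omega_2=(\omega_1-\gamma)/\bigl(z(1-\overline\gamma\,\omega_1)\bigr)$ and $\eta:=\omega_2(0)$. If $|\eta|<1$, repeat once more: $\omega_3=(\omega_2-\eta)/\bigl(z(1-\overline\eta\,\omega_2)\bigr)$ and $\rho:=\omega_3(0)$. All three parameters lie in $\overline{\mathbb D}$.

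Inverting the transforms gives the nested representation
\[
\omega_0=\frac{c_1+z\omega_1}{1+c_1z\omega_1},\qquad
\omega_1=\frac{\gamma+z\omega_2}{1+\overline\gamma z\omega_2},\qquad
\omega_2=\frac{\eta+z\omega_3}{1+\overline\eta z\omega_3}.
\]

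\textbf{Expanding the coefficients.} Since $w=z\omega_0$, the coefficient $c_{k+1}$ is the $k$-th Taylor coefficient of $\omega_0$. Write $\omega_1=\gamma+d_1z+d_2z^2+\cdots$ and expand $(c_1+z\omega_1)(1+c_1z\omega_1)^{-1}$ as a geometric series. This gives
\[
c_2=(1-c_1^2)\gamma,\qquad c_3=(1-c_1^2)(d_1-c_1\gamma^2),\qquad c_4=(1-c_1^2)\bigl(d_2-2c_1\gamma d_1+c_1^2\gamma^3\bigr).
\]
Here the real-valuedness of $c_1$ is what makes the prefactor exactly $1-c_1^2$.

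The same expansion applied to $\omega_1$ in terms of $\omega_2$ gives
\[
d_1=(1-|\gamma|^2)\eta,\qquad d_2=(1-|\gamma|^2)\bigl(\omega_2'(0)-\overline\gamma\eta^2\bigr).
\]
Applied to $\omega_2$ in terms of $\omega_3$ it gives $\omega_2'(0)=(1-|\eta|^2)\rho$.

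Substituting these back, the formula for $c_3$ is immediate. For $c_4$ one gets
\[
c_4=(1-c_1^2)\Bigl(c_1^2\gamma^3-2c_1\gamma(1-|\gamma|^2)\eta-(1-|\gamma|^2)\overline\gamma\eta^2+(1-|\gamma|^2)(1-|\eta|^2)\rho\Bigr),
\]
which is exactly the stated expression.

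\textbf{Degenerate cases.} If $c_1=1$, then $\omega_0\equiv1$, so $c_2=c_3=c_4=0$ and any $\gamma,\eta,\rho$ work because of the factor $1-c_1^2$. If $|\gamma|=1$, then $\omega_1\equiv\gamma$; the formulas then hold with arbitrary $\eta,\rho$, since every term containing them carries the factor $1-|\gamma|^2$. If $|\eta|=1$, then $\omega_2\equiv\eta$ and $\rho$ is arbitrary.

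\textbf{Main obstacle.} The only real difficulty is the bookkeeping in $c_4$. It requires tracking the second-order terms of two nested Möbius compositions while keeping the conjugates $\overline\gamma$ and $\overline\eta$ straight. I would handle it by computing the generic second-order expansion of $(a+zg)/(1+\overline a zg)$ once, as a lemma-like identity, and then applying it twice.
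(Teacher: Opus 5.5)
Your proof is correct. The Schur transforms are well defined, the inversions are right, and the degenerate cases $c_1=1$, $|\gamma|=1$, $|\eta|=1$ are handled properly. Extracting coefficients via the identity $\frac{a+g}{1+\overline{a}g}=a+(1-|a|^2)\sum_{k\ge1}(-\overline{a})^{k-1}g^k$ for $g(0)=0$ reproduces exactly the stated formulas for $c_2$, $c_3$, $c_4$. That identity also settles the $c_4$ bookkeeping you flagged as the main obstacle in one line.

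The paper gives no proof of this lemma; it only quotes it from Prokhorov and Szynal. So your argument is not a competing route. It supplies the standard Schur-parameter derivation that the citation leaves implicit, which makes the lemma self-contained at no real cost.
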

	\begin{thm}
		Let
		$
		f\in\mathcal{C}(\varphi).
		$
		Then the second Hankel determinant satisfies
		\[
		|H_2(2)|=|a_2a_4-a_3^2|
		\le
		\begin{cases}
			\dfrac{1}{36}, & 0\le t\le \dfrac14,\\[2mm]
			\dfrac{1}{144}\left(4+\dfrac{(4t-1)^2}{8+20t-16t^2}\right), & \dfrac14\le t\le \dfrac12.
		\end{cases}
		\]
		Moreover, the estimate is sharp.
	\end{thm}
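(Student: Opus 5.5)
We would write $t:=m/n\in(0,1/2]$ and use the subordination $1+zf''(z)/f'(z)\prec\varphi(z)$ to produce a Schwarz function $w(z)=\sum_{k\ge1}c_kz^k$ with
\[
1+\frac{zf''(z)}{f'(z)}=\varphi(w(z))=1+w(z)+t\,w(z)^2 .
\]
Since the class is rotation invariant and $|H_2(2)|$ is unchanged under rotations $e^{-i\theta}f(e^{i\theta}z)$, we may assume $c_1\in[0,1]$. Comparing coefficients of $z,z^2,z^3$ on both sides gives
\[
2a_2=c_1,\qquad 6a_3-4a_2^2=c_2+tc_1^2,\qquad 12a_4-18a_2a_3+8a_2^3=c_3+2tc_1c_2 .
\]
From these we express $a_2,a_3,a_4$ as polynomials in $c_1,c_2,c_3$. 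We then form $a_2a_4-a_3^2$, which has the shape
\[
\frac{1}{144}\Bigl(\alpha(t)c_1^4+\beta(t)c_1^2c_2-4c_2^2+6c_1c_3\Bigr),
\]
with explicit coefficients $\alpha,\beta$ that are at most quadratic in $t$.

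Next we substitute the parametrisation of Lemma \ref{lemma3} for $c_2$ and $c_3$ in terms of $c_1$, $\gamma$ and $\eta$. The $\eta$-term enters only through $6c_1(1-c_1^2)(1-|\gamma|^2)\eta$, so the triangle inequality with $|\eta|\le1$ gives
\[
144|H_2(2)|\le |P(c_1,\gamma)|+6c_1(1-c_1^2)(1-|\gamma|^2).
\]
Here $P$ is a quadratic polynomial in $\gamma$ with real coefficients depending on $c_1,t$. For $0<c_1<1$ we factor out $6c_1(1-c_1^2)$ and obtain $6c_1(1-c_1^2)\,Y(A,B,C)$, where
\[
A=\frac{\alpha c_1^3}{6(1-c_1^2)},\quad B=\frac{\beta c_1}{6},\quad C=-\frac{(4+6c_1^2)(1-c_1^2)}{6c_1(1-c_1^2)}
\]
(schematically). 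Lemma \ref{lemma2} then evaluates $Y(A,B,C)$ according to the sign of $AC$ and the size of $|B|$ relative to $2(1-|C|)$. The endpoint cases are handled directly. At $c_1=0$ we get $144|H_2(2)|\le 4|\gamma|^2\le4$, which gives $1/36$. At $c_1=1$ we get a single value, $|\alpha(t)|$, which we expect to be smaller.

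The main obstacle is the case analysis in Lemma \ref{lemma2} followed by maximising the resulting function $F(c_1)$ of one variable on $[0,1]$. In the branch we expect to be active, $|C|>1$ makes the condition $|B|\ge 2(1-|C|)$ automatic or forces us into part (ii). We would first try to show that the optimum comes from a branch of the form $1+|A|+B^2/(4(1+|C|))$ or the $c_1=0$ value. We then maximise in $u=c_1^2$. We expect $F$ to be maximised at $u=0$ exactly when $t\le 1/4$. For $1/4<t\le 1/2$ the maximum should be attained at an interior critical point, and evaluating there should yield $4+(4t-1)^2/(8+20t-16t^2)$. The factor $(4t-1)^2$ explains why the two formulas agree at $t=1/4$.

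For sharpness, when $t\le1/4$ we take $w(z)=z^2$, that is $c_1=0$ and $\gamma=1$. The extremal function $f$ is then defined by $1+zf''/f'=1+z^2+tz^4$, so $a_2=a_4=0$, $a_3=1/6$ and $|H_2(2)|=1/36$. For $t>1/4$ we take the optimal $c_1=c_1^*$ and the extremal $\gamma^*$ where $|A+B\gamma+C\gamma^2|+1-|\gamma|^2$ attains $Y$. We choose $\eta$ unimodular with the phase that makes the triangle inequality an equality, and set $\rho=0$. We then realise the Schwarz function through the Schur algorithm, for example as a suitable Blaschke-type product of degree $3$ with these first coefficients. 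The extremal $f$ is obtained by integrating $f''/f'=(\varphi(w)-1)/z$.
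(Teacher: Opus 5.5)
\textbf{There is a genuine gap.} Your setup is correct up through the triangle inequality on the $\eta$-term: the coefficient relations are right, and you get $\alpha=-(4t^2-t+1)$, $\beta=4t+1$ as in the paper. But the decisive step is identifying which case of Lemma~\ref{lemma2} is active and then maximizing over $c_1$. You leave that step as an expectation, and the expectation is wrong.

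\textbf{Correcting $C$.} The $\gamma^2$-coefficient is $-(1-c_1^2)\bigl(4(1-c_1^2)+6c_1^2\bigr)=-2(1-c_1^2)(2+c_1^2)$, because $c_2^2$ carries $(1-c_1^2)^2$. Hence $C=-(2+c_1^2)/(3c_1)$, not $-(4+6c_1^2)/(6c_1)$.

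\textbf{Which case applies.} Since $\alpha<0$, both $A$ and $C$ are negative. So $AC>0$ and you are always in part (i), never part (ii). Moreover $|C|\ge 1$ on $(0,1]$, because $2+c_1^2-3c_1=(1-c_1)(2-c_1)\ge 0$. Thus $|B|\ge 2(1-|C|)$ holds automatically and $Y=|A|+|B|+|C|$. This gives
\[
144|H_2(2)|\le |\alpha|c_1^4+\beta c_1^2(1-c_1^2)+2(1-c_1^2)(2+c_1^2)=4+(4t-1)s+(4t^2-5t-2)s^2,\qquad s=c_1^2 .
\]
This is a concave quadratic in $s$. Its maximum on $[0,1]$ is $4$ for $t\le 1/4$; otherwise it is attained at $s_0=(4t-1)/(4+10t-8t^2)\in[0,1]$, which yields the stated bound.

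\textbf{Why your planned branch fails.} The branch $1+|A|+B^2/(4(1+|C|))$ is not admissible here. Worse, multiplied by $6c_1(1-c_1^2)$ it becomes
\[
6c_1(1-c_1^2)+|\alpha|c_1^4+\frac{\beta^2c_1^4(1-c_1)}{8(c_1+2)} .
\]
This stays below $4$ on $(0,1]$ for every $t\le 1/2$. Following your plan literally (this branch or the $c_1=0$ value) would therefore give $1/36$ for all $t$. That is false for $t>1/4$, and it contradicts your own expectation of an interior maximizer there.

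\textbf{Consequence for sharpness.} The same error propagates to your extremal construction for $t>1/4$. With $A,C<0<B$, the maximum of $|A+B\gamma+C\gamma^2|+1-|\gamma|^2$ is attained at $\gamma=-1$, where all three terms of $A-B+C$ have the same sign. Then $1-|\gamma|^2=0$, so $\eta$ and $\rho$ play no role. The extremal Schwarz function is the degree-$2$ Blaschke product $w(z)=z(c_1-z)/(1-c_1z)$ with $c_1^2=s_0$. It is not a degree-$3$ product with interior $\gamma^*$ and unimodular $\eta$. Your choice $w(z)=z^2$ for $t\le 1/4$ agrees with the paper.

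\textbf{How the paper avoids this.} The paper does not use Lemma~\ref{lemma2} here at all. It bounds each term with $y=|\gamma|$ and uses $4+2x^2-6x=2(1-x)(2-x)\ge 0$, which is exactly your missing fact $|C|\ge 1$, to show the bound increases in $y$. It then sets $y=1$ and obtains the same quadratic in $s$ directly.
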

	
	\begin{proof}
		Let $f\in \mathcal{C}(\varphi)$ be given by
		\begin{align}\label{eq:3.7}
			1+z \frac{f''(z)}{f'(z)} \prec \varphi(z), \quad z\in \mathbb{D}.
		\end{align}
		By the subordination relation, there exists a Schwarz function
		\[
		w(z)=\sum_{n=1}^{\infty}c_n z^n
		=c_1z+c_2z^2+c_3z^3+c_4z^4+\cdots
		\]
		such that
		\begin{align}\label{eq:3.8}
			1+\frac{z f''(z)}{f'(z)}=1+w(z)+t\,w(z)^2.
		\end{align}
		Since \(f\in\mathcal A\), we have
		\[
		f(z)=z+\sum_{n=2}^{\infty} a_n z^n
		\]
		Writing \eqref{eq:3.8} in series form and comparing coefficients, we obtain
		\[
		a_2=\frac{c_1}{2},\qquad
		a_3=\frac{(1+t)c_1^2+c_2}{6},\qquad
		a_4=\frac{(1+3t)c_1^3+(3+4t)c_1c_2+2c_3}{24}.
		\]
		Substituting these into \eqref{eq:3.2}, we obtain
		\begin{align}\label{eq:3.9}
			H_2(2)
			&=a_2a_4-a_3^2\notag\\
			&=\frac{(4t+1)c_1^2c_2+6c_1c_3-4c_2^2-(4t^2-t+1)c_1^4}{144}.
		\end{align}
		Now, since the class \(\mathcal C(\varphi)\) is invariant under rotations and \(|H_2(2)|\) is rotation-invariant, we may assume without loss of generality that \(c_1\ge 0\). Write
		\[
		c_1=x,\qquad 0\le x\le 1.
		\]
		Using Lemma \ref{lemma3} in \eqref{eq:3.9}, we obtain
		\begin{align}\label{eq:3.10}
			144\,H_2(2)
			&=-(4t^2-t+1)x^4+(4t+1)x^2(1-x^2)\gamma \notag\\
			&\quad -2(1-x^2)(2+x^2)\gamma^2
			+6x(1-x^2)(1-|\gamma|^2)\eta.
		\end{align}
		Taking absolute values in \eqref{eq:3.10}, and setting \(y=|\gamma|\), we get
		\begin{align}\label{eq:3.11}
			144\,|H_2(2)|
			&\le (4t^2-t+1)x^4+(4t+1)x^2(1-x^2)y \notag\\
			&\quad +2(1-x^2)(2+x^2)y^2
			+6x(1-x^2)(1-y^2).
		\end{align}
		We now simplify the last two terms:
		\begin{align*}
			&2(1-x^2)(2+x^2)y^2+6x(1-x^2)(1-y^2)\\
			&=(1-x^2)\bigl(2(2+x^2)y^2+6x(1-y^2)\bigr)\\
			&=(1-x^2)\bigl(6x+(4+2x^2-6x)y^2\bigr).
		\end{align*}
		Hence \eqref{eq:3.11} becomes
		\begin{align*}
			144\,|H_2(2)|
			&\le (4t^2-t+1)x^4+(4t+1)x^2(1-x^2)y \\
			&\quad +(1-x^2)(4+2x^2-6x)y^2
			+6x(1-x^2).
		\end{align*}
		For fixed \(x\) and \(t\), define
		\begin{align}\label{eq:3.12}
			F_{1}(y)
			&:=(4t^2-t+1)x^4+(4t+1)x^2(1-x^2)y \notag\\
			&\quad +(1-x^2)(4+2x^2-6x)y^2
			+6x(1-x^2),
		\end{align}
		where \(0\le y\le 1\). Since
		\[
		4+2x^2-6x=2(1-x)(2-x)\ge 0
		\qquad (0\le x\le 1),
		\]
		the coefficients of \(y\) and \(y^2\) in \(F_1(y)\) are non-negative. Indeed,
		\[
		(4t+1)x^2(1-x^2)\ge 0,
		\qquad
		(1-x^2)(4+2x^2-6x)\ge 0.
		\]
		Therefore
		\[
		F_1'(y)
		=
		(4t+1)x^2(1-x^2)
		+
		2(1-x^2)(4+2x^2-6x)y
		\ge 0
		\]
		for all \(y\in[0,1]\). Hence \(F_1\) is increasing on \([0,1]\), and so its maximum occurs at \(y=1\). Consequently,
		\begin{align*}
			144\,|H_2(2)|
			&\le F_1(1)\\
			&=(4t^2-t+1)x^4+(4t+1)x^2(1-x^2)\\
			&\quad +(1-x^2)(4+2x^2-6x)+6x(1-x^2).
		\end{align*}
		Combining the last two terms, we get
		\[
		(1-x^2)(4+2x^2-6x)+6x(1-x^2)=2(1-x^2)(2+x^2).
		\]
		Thus
		\begin{align*}
			144\,|H_2(2)|
			&\le (4t^2-t+1)x^4+(4t+1)x^2(1-x^2)+2(1-x^2)(2+x^2)\\
			&=4+(4t-1)x^2+(4t^2-5t-2)x^4.
		\end{align*}
		Taking, 
		$
		s=x^2,\, 0\le s\le 1,
		$
		we obtain
		\[
		144\,|H_2(2)|\le \psi_t(s),
		\qquad
		\psi_t(s):=4+(4t-1)s+(4t^2-5t-2)s^2.
		\]
		Since
		\[
		4t^2-5t-2<0
		\quad \text{ for }\quad 0\le t\le \frac12,
		\]
		the function \(\psi_t\) is concave on \([0,1]\).\\[2mm]
		If \(0\le t\le 1/4\), then \(4t-1\le 0\), and therefore the maximum of \(\psi_t\) on \([0,1]\) is attained at \(s=0\). Thus
		\[
		\max_{0\le s\le 1}\psi_t(s)=4,
		\]
		which gives
		\[
		|H_2(2)|\le \frac{4}{144}=\frac{1}{36}.
		\]
		If \(1/4\le t\le 1/2\), then the vertex of \(\psi_t\) is
		\[
		s_0=\frac{4t-1}{4+10t-8t^2}.
		\]
		Since \(4t-1\ge 0\) and \(4+10t-8t^2>0\) for \(1/4\le t\le 1/2\), we have \(s_0\ge 0\). Also,
		\[
		1-s_0
		=
		\frac{4+10t-8t^2-(4t-1)}{4+10t-8t^2}
		=
		\frac{5+6t-8t^2}{4+10t-8t^2}>0,
		\]
		for \(1/4\le t\le 1/2\). Hence \(s_0\in[0,1]\). Therefore
		\[
		\max_{0\le s\le 1}\psi_t(s)
		=
		\psi_t(s_0)
		=
		4+\frac{(4t-1)^2}{8+20t-16t^2}.
		\]
		Thus
		\[
		|H_2(2)|
		\le
		\frac{1}{144}\left(4+\frac{(4t-1)^2}{8+20t-16t^2}\right).
		\]
		Combining the two cases, we obtain
		\[
		|H_2(2)|
		\le
		\begin{cases}
			\dfrac{1}{36}, & 0\le t\le \dfrac14,\\[2ex]
			\dfrac{1}{144}\left(4+\dfrac{(4t-1)^2}{8+20t-16t^2}\right), & \dfrac14\le t\le \dfrac12.
		\end{cases}
		\]
		To see sharpness, first note that for \(0\le t\le 1/4\), equality is attained for the function \(f\in\mathcal C(\varphi)\) corresponding to the Schwarz function
		$
		w(z)=z^2.
		$
		In this case, the subordinating relation
		\[
		1+\frac{z f''(z)}{f'(z)}=1+w(z)+t\,w(z)^2
		\]
		which is equivalent to
		\[
		1+\frac{z f''(z)}{f'(z)}=1+z^2+t z^4,
		\]
		so that
		\[
		\frac{f''(z)}{f'(z)}=z+t z^3.
		\]
		Thus, we have 
		\[
		\frac{d}{dz}\bigl(\log f'(z)\bigr)=z+t z^3.
		\]
		Integrating and using \(f'(0)=1\), we obtain
		\[
		f'(z)=\exp\!\left(\frac{z^2}{2}+\frac{t z^4}{4}\right).
		\]
		A further integration, together with the initial condition \(f(0)=0\), gives
		\[
		f(z)=\int_0^z \exp\!\left(\frac{\xi^2}{2}+\frac{t \xi^4}{4}\right)\,d\xi.
		\]
		Expanding,
		\[
		f(z)
		=
		z+\frac{z^3}{6}
		+\frac{1+2t}{40}\,z^5
		+\frac{1+6t}{336}\,z^7+\cdots.
		\]
		In particular,
		\[
		a_2=0,\qquad a_3=\frac16,\qquad a_4=0.
		\]
		Hence
		\[
		H_2(2)=a_2a_4-a_3^2=-\frac{1}{36},
		\]
		and so
		\begin{align*}
			|H_2(2)|=\frac{1}{36}.
		\end{align*}
		For \(1/4\le t\le 1/2\), equality is attained for the function \(f\in\mathcal C(\varphi)\) corresponding to the Schwarz function
		\[
		w_x(z)=z\frac{x-z}{1-xz},
		\qquad
		x^2=\frac{4t-1}{4+10t-8t^2}.
		\]
		Expanding \(w_x\), we get
		\begin{align*}
			w_x(z)
			&=(xz-z^2)\sum_{n=0}^\infty x^n z^n \\
			&=xz+(x^2-1)z^2+(x^3-x)z^3+(x^4-x^2)z^4+\cdots .
		\end{align*}
		Thus
		\[
		c_1=x,\qquad c_2=x^2-1,\qquad c_3=x^3-x,\qquad c_4=x^4-x^2.
		\]
		Using the coefficient formulas,
		\[
		a_2=\frac{c_1}{2},\qquad
		a_3=\frac{(1+t)c_1^2+c_2}{6},\qquad
		a_4=\frac{(1+3t)c_1^3+(3+4t)c_1c_2+2c_3}{24},
		\]
		we obtain
		\begin{align*}
			a_2&=\frac{x}{2},\\[2mm]
			a_3&=\frac{(2+t)x^2-1}{6},\\[2mm]
			a_4&=\frac{x\bigl((6+7t)x^2-(5+4t)\bigr)}{24}.
		\end{align*}
		Hence the corresponding extremal function \(f\) has the expansion
		\begin{align*}
			f(z)
			&=
			z+\frac{x}{2}z^2
			+\frac{(2+t)x^2-1}{6}z^3
			+\frac{x\bigl((6+7t)x^2-(5+4t)\bigr)}{24}z^4
			+\cdots,
		\end{align*}
		where
		\[
		x^2=\frac{4t-1}{4+10t-8t^2}.
		\]
		Moreover,
		\[
		c_2=x^2-1=-(1-x^2)=(1-c_1^2)\gamma,
		\]
		thus \(\gamma=-1\). Hence
		$
		y=|\gamma|=1,$ and
		$1-|\gamma|^2=0$.
		Therefore the \(\eta\)-term in \eqref{eq:3.10} vanishes, and \eqref{eq:3.11} becomes an equality. In fact,
		\begin{align*}
			144\,H_2(2)
			&=-(4t^2-t+1)x^4+(4t+1)x^2(x^2-1)+6x(x^3-x)-4(x^2-1)^2\\
			&=-(4t^2-t+1)x^4-(4t+1)x^2(1-x^2)-2(1-x^2)(2+x^2).
		\end{align*}
		Hence
		\[
		144\,|H_2(2)|
		=
		(4t^2-t+1)x^4+(4t+1)x^2(1-x^2)+2(1-x^2)(2+x^2)
		=
		\psi_t(x^2).
		\]
		Now choosing
		\[
		x^2=\frac{4t-1}{4+10t-8t^2}=s_0,
		\]
		we obtain
		\[
		144\,|H_2(2)|=\psi_t(s_0)
		=
		4+\frac{(4t-1)^2}{8+20t-16t^2}.
		\]
		Therefore
		\[
		|H_2(2)|
		=
		\frac{1}{144}\left(4+\frac{(4t-1)^2}{8+20t-16t^2}\right),
		\]
		and the bound is sharp.
	\end{proof}
	\begin{thm}
		Let 	$
		f\in\mathcal{C}(\varphi),
		$
		then
		$
		|H_3(1)|\le {1}/{144}.
		$
		Moreover, the inequality is sharp.
	\end{thm}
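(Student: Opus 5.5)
The plan is to follow the scheme of the previous theorem: express $H_3(1)$ through the coefficients of the Schwarz function, apply Lemma~\ref{lemma3}, and maximise over the resulting parameters. First, extend the coefficient comparison in \eqref{eq:3.8} by one more order to get $a_5$ as a polynomial in $c_1,\dots,c_4$ and $t$. Its leading part is $a_5=\tfrac{1}{20}\bigl(c_4+\dots\bigr)$, with the remaining terms of the form $c_1c_3$, $c_2^2$, $c_1^2c_2$ and $c_1^4$, with coefficients depending on $t$. Substituting $a_2,\dots,a_5$ into \eqref{eq:3.3} gives $H_3(1)$ as a weighted-homogeneous polynomial in $c_1,\dots,c_4$ of total weight $6$.

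The candidate extremal function comes from $w(z)=z^3$. Then
\[
\frac{f''(z)}{f'(z)}=z^2+tz^5, \qquad f'(z)=\exp\!\Bigl(\tfrac{z^3}{3}+\tfrac{tz^6}{6}\Bigr),
\]
so $a_2=a_3=a_5=0$ and $a_4=1/12$. Hence $H_3(1)=-a_4^2=-1/144$, which will give sharpness once the bound is proved.

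For the upper bound, use rotation invariance to assume $c_1=x\in[0,1]$, and insert the representation of Lemma~\ref{lemma3} for $c_2,c_3,c_4$ in terms of $\gamma,\eta,\rho$. The expression is linear in $\rho$, with coefficient a constant multiple of $(1-x^2)^2(1-|\gamma|^2)(1-|\eta|^2)$ coming from $a_3a_5$ and $a_5(a_3-a_2^2)$. So $|\rho|\le1$ reduces $144|H_3(1)|$, after the triangle inequality, to a bound of the form
\[
\Phi_0(x,\gamma)+\Phi_1(x,\gamma)\,\bigl(|A+B\eta+C\eta^2|+1-|\eta|^2\bigr),
\]
where $A,B,C$ depend on $x$, $y=|\gamma|$ and $t$. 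After absolute values these can be taken real. Lemma~\ref{lemma2} then evaluates $Y(A,B,C)$ explicitly on each of its subregions.

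Two boundary cases should be checked first. At $x=0$ we have $c_2=\gamma$, $c_3=\eta(1-|\gamma|^2)$ and $a_2=0$, so $H_3(1)=a_3a_5-a_4^2-a_3^3$. Here the bound $1/144$ should come out directly, with equality at $\gamma=0$, $|\eta|=1$. At $x=1$ the function is the rotation-type extremal $w(z)=z$, and $|H_3(1)|$ is an explicit function of $t$ that must be checked to be below $1/144$ for $0\le t\le\frac12$.

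The main obstacle is the interior region $0<x<1$, $0\le y\le 1$. There I would reduce to a two-variable function $G(x,y)$, uniformly for $t\in[0,\tfrac12]$. I would try to show that $G$ is increasing in $y$, or attains its maximum at $y=0$ or $y=1$, as for $F_1$ in the previous proof. I would then show that each resulting one-variable function of $x$ is bounded by $4\cdot\tfrac1{36}\cdot\tfrac14=\tfrac1{144}$, using crude coefficient bounds valid for $t\le\frac12$. Which branch of Lemma~\ref{lemma2} applies will have to be decided region by region, and if necessary $t$ can be replaced by its worst value $\tfrac12$ in the coefficients. This casework is where most of the effort lies.
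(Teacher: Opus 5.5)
\paragraph{Verdict: genuine gap.} Your set-up (rotation to $c_1\ge 0$, Lemma~\ref{lemma3}, triangle inequality) and the sharpness example $w(z)=z^3$ are correct, and so are both boundary checks. For $c_1=0$ one gets $8640|H_3(1)|\le |72t-4|\,|\gamma|^3+(1-|\gamma|^2)(60+12|\gamma|^2)\le 60$. For $c_1=1$ one gets $8640|H_3(1)|=1-6t+21t^2+4t^3\le \tfrac{15}{4}$. The gap is the interior of the region, which is the whole content of the theorem. You only sketch it, and the key step of your sketch fails.

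\paragraph{The step that fails.} For fixed $c_1$, the maximum over $|\gamma|$ is \emph{not} attained at $|\gamma|\in\{0,1\}$. Take $t=0$ (and, by continuity, small $t>0$) and $c_1=\tfrac12$. With $\gamma=0$ the largest possible value of $8640|H_3(1)|$ is $\tfrac{2233}{64}\approx 34.89$. With $|\gamma|=1$ it is at most $\tfrac{223}{16}\approx 13.94$. But $\gamma=-\tfrac12$, $\eta=1$ (that is, $c_2=-\tfrac38$, $c_3=\tfrac{15}{32}$, $c_4=\tfrac{69}{128}$) gives $8640|H_3(1)|=\tfrac{2287}{64}\approx 35.73$. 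So you cannot reduce to one-variable functions $G(c_1,0)$ and $G(c_1,1)$; a genuinely two-dimensional maximisation in $(c_1,|\gamma|)$ is needed.

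\paragraph{Other problems.} Replacing $t$ by $\tfrac12$ is not justified, because the $t$-dependence is not monotone. The factor $(1-2t)c_1^2$ in the $\eta^2$- and $\rho$-coefficients decreases in $t$, $72t-4$ changes sign, and $1-6t+21t^2+4t^3$ first decreases and then increases on $[0,\tfrac12]$. The $\rho$-coefficient is
$\tfrac1{240}(1-c_1^2)(1-|\gamma|^2)(1-|\eta|^2)\bigl((2t-1)c_1^2+2(1-c_1^2)\gamma\bigr)$,
not a constant times $(1-c_1^2)^2(1-|\gamma|^2)(1-|\eta|^2)$. It vanishes at $c_1=\gamma=0$, which is exactly the extremal configuration, so your normalisation for Lemma~\ref{lemma2} degenerates there. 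Also, since $A,B,C$ are complex, that lemma only applies after passing to moduli, where it is just the elementary maximisation over $|\eta|\in[0,1]$. Finally, $4\cdot\tfrac1{36}\cdot\tfrac14=\tfrac1{36}$, not $\tfrac1{144}$.

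\paragraph{How the paper closes this gap.} The paper does it by explicit computation.
\begin{enumerate}
\item After the triangle inequality it replaces the term linear in $|\eta|$ by its value at $|\eta|=1$. The majorant then has the form $a+b|\eta|^2$, so only $|\eta|\in\{0,1\}$ matter.
\item On the branch $|\eta|=1$ it proves $\partial^2G_1/\partial t^2\ge 0$, which reduces the problem to $t\in\{0,\tfrac12\}$. It then bounds the two resulting polynomials in $(c_1,|\gamma|)$ by their Bernstein coefficients on a subdivided square. On $[0,\tfrac14]^2$ the Bernstein coefficients still exceed $60$, because the maximum $60$ sits at the corner $(0,0)$, so this square is finished by hand.
\item On the branch $|\eta|=0$ it uses a tri-variate Bernstein expansion in $(c_1,|\gamma|,2t)$ with one subdivision, giving a bound of $\tfrac{39295}{768}<60$.
\end{enumerate}
Your proposal needs some global two-variable argument of this kind, or an equally explicit substitute; as it stands it does not prove $|H_3(1)|\le\tfrac1{144}$.
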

	\begin{pf}
		Let $f\in \mathcal{C}(\varphi)$ be given by 
		\[
		1+z\frac{f''(z)}{f'(z)} \prec \varphi(z), \quad z\in \mathbb{D}.
		\]
		Recall that for the third Hankel determinant , we need to evaluate 
		\begin{align}\label{eq:3.13}
			H_{3}(1)=
			a_3\bigl(a_2a_4-a_3^2\bigr)-a_4\bigl(a_4-a_2a_3\bigr)+a_5\bigl(a_3-a_2^2\bigr),
		\end{align}
		where $f$ has the series representation 
		\[
		f(z)=z+\sum_{n=2}^{\infty} a_{n} z^{n}=z+a_{2}z^2+a_{3}z^3+a_{4}z^4+\cdots.
		\]
		Now comparing the coefficient of \eqref{eq:3.8} we obtain 
		\begin{align}\label{eq:3.14}
			\left.
			\begin{aligned}
				a_2 &= \frac{c_1}{2},\\[4pt]
				a_3 &= \frac{1}{6}\bigl((1+t)c_1^2+c_2\bigr),\\[4pt]
				a_4 &= \frac{1}{24}\bigl((1+3t)c_1^3+(3+4t)c_1c_2+2c_3\bigr),\\[4pt]
				a_5 &= \frac{1}{120}\bigl((1+6t+3t^2)c_1^4+2(3+11t)c_1^2c_2+(3+6t)c_2^2 \\
				&\qquad\quad +4(2+3t)c_1c_3+6c_4\bigr).
			\end{aligned}
			\right\}
		\end{align} 
		Now, putting the values of \eqref{eq:3.14} in \eqref{eq:3.13}, we obtain 
		\begin{align}\label{eq:3.15}
			8640\, H_{3}(1)=&-\bigl(1-6t+21t^2+4t^3\bigr)c_1^6
			+6\bigl(1-3t+10t^2\bigr)c_1^4c_2
			+\bigl(-4+72t\bigr)c_2^3 \notag\\
			&\quad
			+12\bigl(1-3t+12t^2\bigr)c_1^3c_3
			+12\bigl(3-8t\bigr)c_1c_2c_3
			-60c_3^2
			+72c_2c_4 \notag\\
			&\quad
			-3c_1^2\Bigl(\bigl(7-8t+56t^2\bigr)c_2^2+12\bigl(1-2t\bigr)c_4\Bigr).
		\end{align}	
		Now using Lemma \ref{lemma3} in  \eqref{eq:3.15} we obtain 
		\begin{align}\label{eq:3.16}
			8640\, H_{3}(1)= A_{1}+B_{1} \eta +C_{1}\eta^2+ D_{1}\rho
		\end{align}
		where, 	
		\begin{align*}
			A_1={}&-\bigl(1-6t+21t^2+4t^3\bigr)c_1^6
			-6\bigl(1-3t+10t^2\bigr)\gamma c_1^4\bigl(-1+c_1^2\bigr)
			+12\gamma^4 c_1^2\bigl(-1+c_1^2\bigr)^2 \notag\\
			&\quad
			-3\gamma^2 c_1^2\bigl(-1+c_1^2\bigr)
			\Bigl(-7+3c_1^2+8t^2\bigl(-7+c_1^2\bigr)+4t\bigl(2+c_1^2\bigr)\Bigr) \notag\\
			&\quad
			-4\gamma^3\bigl(-1+c_1^2\bigr)
			\Bigl(-1-7c_1^2-c_1^4+6t\bigl(3-2c_1^2+2c_1^4\bigr)\Bigr),\notag\\[2mm]
			B_1={}&12\bigl(-1+|\gamma|^2\bigr)c_1\bigl(-1+c_1^2\bigr)
			\Bigl(\bigl(1-3t+12t^2\bigr)c_1^2
			+2\gamma^2\bigl(-1+c_1^2\bigr)\\
			&\quad+3\gamma\bigl(1+c_1^2\bigr)
			-4t\gamma\bigl(2+c_1^2\bigr)\Bigr),\notag\\[2mm]
			C_1={}&-12\bigl(-1+|\gamma|^2\bigr)\bigl(-1+c_1^2\bigr)
			\Bigl(5-5c_1^2+5|\gamma|^2\bigl(-1+c_1^2\bigr) \notag\\
			&\quad
			-3\Bigl(\bigl(1-2t\bigr)c_1^2+2\gamma\bigl(-1+c_1^2\bigr)\Bigr)\overline{\gamma}\Bigr),\notag\\[2mm]
			D_1={}&36\bigl(-1+|\gamma|^2\bigr)\bigl(-1+|\eta|^2\bigr)\bigl(-1+c_1^2\bigr)
			\Bigl(\bigl(1-2t\bigr)c_1^2+2\gamma\bigl(-1+c_1^2\bigr)\Bigr).
		\end{align*}
		Taking modulus on both sides of \eqref{eq:3.16}, and using
		$|\gamma|=x$, $|\eta|=y$, $|\rho|\le 1$, together with the triangle inequality, we obtain
		\[
		8640\,|H_3(1)|\le H(p_1,x,y,t),
		\]
		where
		\begin{align}
			H(p_1,x,y,t)
			&:=\bigl(1-6t+21t^2+4t^3\bigr)p_1^6 
			+x\Bigl(6\bigl(1-3t+10t^2\bigr)p_1^4(1-p_1^2)\Bigr) \notag\\
			&\quad	+x^2\Bigl(3p_1^2(1-p_1^2)\bigl(7-3p_1^2+8t^2(7-p_1^2)-4t(2+p_1^2)\bigr)\Bigr) \notag\\
			&\quad
			+x^3\Bigl(4(1-p_1^2)\bigl(1+7p_1^2+p_1^4+6t(3-2p_1^2+2p_1^4)\bigr)\Bigr) \notag\\
			&\quad
			+x^4\Bigl(12p_1^2(-1+p_1^2)^2\Bigr) \notag\\
			&\quad
			+y\big(12(1-x^2)p_1(1-p_1^2)\bigl((1-3t+12t^2)p_1^2+2x^2(1-p_1^2) \notag\\
			&\quad
			+3x(1+p_1^2)+4tx(2+p_1^2)\bigr)\big) \notag\\
			&\quad
			+y^2\Bigl(12(1-x^2)(1-p_1^2)\bigl(5(1-x^2)(1-p_1^2) \notag\\
			&\quad
			+3\bigl((1-2t)p_1^2+2x(1-p_1^2)\bigr)x\bigr)\Bigr) \notag\\
			&\quad
			+36(1-x^2)(1-y^2)(1-p_1^2)\bigl((1-2t)p_1^2+2x(1-p_1^2)\bigr).
		\end{align}
		It is easy to see that the coefficient of the term linear in  $y,$ precisely 
		$$\Bigl(12(1-x^2)p_1(1-p_1^2)\bigl((1-3t+12t^2)p_1^2+2x^2(1-p_1^2)
		+3x(1+p_1^2)+4tx(2+p_1^2)\bigr)\Bigr)$$
		remains non-negative throughout the region $\mathcal{D}:=\{
		(p_1, x, y, t)\in [0, 1]\times [0, 1]\times [0, 1]\times [0, 1/2]\}.$ Thus, to simplify the estimate, we replace this linear term by its value at $y=1$ for this term and we denote the new function by $H_{1}(p_1, x, y, t),$ which satisfies the property $H(p_1, x, y, t)\le H_{1}(p_1, x, y, t)$ for every $(p_1, x, y, t)\in \mathcal{D}.$ We write $H_{1}(p_1, x, y, t)$ as follows:
		\begin{align}
			H_1(p_1,x,y,t)
			&:=
			\left(1-6t+21t^2+4t^3\right)p_1^6 
			+6\left(1-3t+10t^2\right)p_1^4(1-p_1^2)\,x \notag\\
			&\quad
			+3p_1^2(1-p_1^2)\left(7-3p_1^2+8t^2(7-p_1^2)-4t(2+p_1^2)\right)x^2 \notag\\
			&\quad
			+4(1-p_1^2)\big(1+7p_1^2+p_1^4+6t(3-2p_1^2+2p_1^4)\big)x^3 \notag\\
			&\quad
			+12p_1^2(-1+p_1^2)^2x^4 
			+12(1-x^2)p_1(1-p_1^2)
			\big((1-3t+12t^2)p_1^2\notag\\
			&\quad+2x^2(1-p_1^2)+3x(1+p_1^2)\notag\\
			&\quad+4tx(2+p_1^2)\big) 
			+12y^2(1-x^2)(1-p_1^2)
			\big(5(1-x^2)(1-p_1^2)\notag\\
			&\quad+3\bigl((1-2t)p_1^2+2x(1-p_1^2)\bigr)x\big) \notag\\
			&\quad
			+36(1-x^2)(1-y^2)(1-p_1^2)\left((1-2t)p_1^2+2x(1-p_1^2)\right).
		\end{align}
		Since $H_1$ is a quadratic polynomial in $y$ with no linear term, its maximum occurs at either $y=0$ or $y=1$. We denote $G_{1}(p_1, x, t):=H_{1}(p_1, x, 1, t)$ and $G_{2}(p_1, x, t):=H_{1}(p_1, x, 0, t).$ First we deal with the case $y=1.$ 
		For simplicity, write $p=p_1.$
		Next, we want to determine
		\begin{align*}
			\max\left\{G_{1}(p,x,t):0\le p\le 1,\ 0\le x\le 1,\ 0\le t\le \frac12\right\}.
		\end{align*}
		We first eliminate the variable \(t\). A direct computation gives
		\[
		\frac{\partial^2 G_{1}}{\partial t^2}
		=
		6p^2\Bigl(
		4p^4t+8p^4x^2-20p^4x+7p^4+48p^3x^2-48p^3-64p^2x^2+20p^2x-48px^2+48p+56x^2
		\Bigr).
		\]
		Rewriting the bracket, 
		\[
		8(p-1)^2(p+1)(p+7)x^2+20p^2(1-p^2)x+p\bigl(4p^3t+7p^3-48p^2+48\bigr).
		\]
		Since
		\[
		8(p-1)^2(p+1)(p+7)x^2\ge 0,
		\qquad
		20p^2(1-p^2)x\ge 0,
		\]
		it remains to estimate the last term. For \(0\le t\le1/2\), 
		\begin{align*}
			4p^3t+7p^3-48p^2+48\ge 7p^3-48p^2+48.
		\end{align*}
		Now
		\[
		\frac{d}{dp}\bigl(7p^3-48p^2+48\bigr)=21p^2-96p<0,
		\qquad 0\le p\le 1,
		\]
		so the minimum on \([0,1]\) occurs at \(p=1\), where the value is \(7\). Hence
		\[
		\frac{\partial^2 G_{1}}{\partial t^2}\ge 0
		\qquad\text{on }[0,1]\times[0,1]\times\left[0,\frac12\right].
		\]
		Therefore \(G_{1}(p, x, t)\) is convex in \(t\), and so
		\begin{align*}
			\max_{0\le t\le 1/2}G_{1}(p,x,t)=\max\left\{G_{1}(p,x,0),\,G_{1}\left(p,x,\frac12\right)\right\}.
		\end{align*}
		Setting, 
		\[
		G_0(p,x):=G_{1}(p,x,0),
		\qquad
		G_{1/2}(p,x):=G_{1}\left(p,x,\frac12\right).
		\]
		We write $G_{0}(p, x)$ and $G_{1/2}(p, x)$ in their extended form such that 
		\[
		\begin{aligned}
			G_0(p,x)=\;&
			12p^6x^4-4p^6x^3+9p^6x^2-6p^6x+p^6
			-24p^5x^4+36p^5x^3\\
			&+36p^5x^2-36p^5x-12p^5-36p^4x^4+12p^4x^3-78p^4x^2\\
			&-30p^4x+60p^4
			+48p^3x^4-60p^3x^2+12p^3+36p^2x^4-12p^2x^3\\
			&+117p^2x^2+36p^2x-120p^2
			-24px^4-36px^3+24px^2+36px\\
			&-12x^4+4x^3-48x^2+60,
		\end{aligned}
		\]
		and
		\[
		\begin{aligned}
			G_{1/2}(p,x)=\;&
			12p^6x^4-28p^6x^3+21p^6x^2-12p^6x+\frac{15}{4}p^6
			-24p^5x^4+60p^5x^3+54p^5x^2\\
			&-60p^5x-30p^5-36p^4x^4+24p^4x^3-120p^4x^2+12p^4x+60p^4
			+48p^3x^4\\
			&+24p^3x^3-78p^3x^2-24p^3x+30p^3+36p^2x^4-36p^2x^3+147p^2x^2\\
			&-120p^2
			-24px^4-84px^3+24px^2+84px-12x^4+40x^3-48x^2+60.
		\end{aligned}
		\]
		We now express each polynomial in the  Bernstein basis
		\begin{align*}
			\beta_i^{(6)}(p)\beta_j^{(4)}(x),
			\qquad
			\beta_i^{(m)}(s)=\binom{m}{i}s^i(1-s)^{m-i}.
		\end{align*}
		Thus, we have 
		\begin{align*}
			G_0(p,x)&=\sum_{i=0}^6\sum_{j=0}^4 b^{(0)}_{ij}\,\beta_i^{(6)}(p)\beta_j^{(4)}(x),
			\\[2mm]
			G_{1/2}(p,x)&=\sum_{i=0}^6\sum_{j=0}^4 b^{(1/2)}_{ij}\,\beta_i^{(6)}(p)\beta_j^{(4)}(x).
		\end{align*}
		The Bernstein coefficients for $G_{0}$ are
		\[
		B^{(0)}=(b^{(0)}_{ij})=
		\begin{pmatrix}
			60 & 60 & 52 & 37 & 4\\[5mm]
			60 & \frac{123}{2} & \frac{167}{3} & 42 & 4\\[5mm]
			52 & \frac{278}{5} & \frac{323}{6} & \frac{89}{2} & \frac{39}{5}\\[5mm]
			\frac{183}{5} & \frac{429}{10} & \frac{233}{5} & \frac{218}{5} & \frac{77}{5}\\[5mm]
			\frac{92}{5} & \frac{55}{2} & \frac{181}{5} & \frac{77}{2} & 22\\[5mm]
			4 & \frac{27}{2} & 23 & \frac{53}{2} & 18\\[5mm]
			1 & 1 & 1 & 1 & 1
		\end{pmatrix}.
		\]\\
		Therefore, we have 
		\begin{align*}
			G_0(p,x)\le \max B^{(0)}=\frac{123}{2}.
		\end{align*}
		The Bernstein coefficients for $G_{1/2}$ are
		\[
		B^{(1/2)}=(b^{(1/2)}_{ij})=
		\begin{pmatrix}
			60 & 60 & 52 & 46 & 40\\[5mm]
			60 & \frac{127}{2} & \frac{179}{3} & 55 & 40\\[5mm]
			52 & 59 & \frac{1829}{30} & \frac{603}{10} & \frac{209}{5}\\[5mm]
			\frac{75}{2} & \frac{477}{10} & \frac{1123}{20} & \frac{1217}{20} & \frac{227}{5}\\[5mm]
			22 & 35 & \frac{728}{15} & \frac{283}{5} & \frac{234}{5}\\[5mm]
			10 & 23 & 36 & 43 & 38\\[5mm]
			\frac{15}{4} & \frac{15}{4} & \frac{15}{4} & \frac{15}{4} & \frac{15}{4}
		\end{pmatrix}.
		\]\\
		Thus, we have
		\begin{align*}
			G_{1/2}(p,x)\le \max B^{(1/2)}=\frac{127}{2}.
		\end{align*}
		These first bounds are not sharp enough, so we subdivide the square \([0,1]\times[0, 1]\) into
		\begin{align*}
			Q_1&=\left[0,\frac12\right]\times\left[0,\frac12\right],\qquad
			Q_2=\left[0,\frac12\right]\times\left[\frac12,1\right],
			\\[4mm]
			Q_3&=\left[\frac12,1\right]\times\left[0,\frac12\right],\qquad
			Q_4=\left[\frac12,1\right]\times\left[\frac12,1\right].
		\end{align*}
		To compute the Bernstein coefficients on each sub-regions, we map each \(Q_r\) affinely onto the unit square \([0,1]\times[0, 1]\). If \((u,v)\in[0,1]\times[0, 1]\), then the affine changes are
		\[
		\begin{aligned}
			Q_1:\quad & p=\frac{u}{2},\qquad x=\frac{v}{2},\\[1mm]
			Q_2:\quad & p=\frac{u}{2},\qquad x=\frac{1+v}{2},\\[1mm]
			Q_3:\quad & p=\frac{1+u}{2},\qquad x=\frac{v}{2},\\[1mm]
			Q_4:\quad & p=\frac{1+u}{2},\qquad x=\frac{1+v}{2}.
		\end{aligned}
		\]
		We use the  Bernstein basis
		\[
		\beta_i^{(6)}(u)=\binom{6}{i}u^i(1-u)^{6-i},
		\qquad
		\beta_j^{(4)}(v)=\binom{4}{j}v^j(1-v)^{4-j}.
		\]
		Thus, for each \(r=1,2,3,4\), the transformed polynomial can be written in the form
		\[
		G_0\bigl(p(u),x(v)\bigr)
		=
		\sum_{i=0}^{6}\sum_{j=0}^{4} b^{(Q_r)}_{ij}\,\beta_i^{(6)}(u)\beta_j^{(4)}(v).
		\]
		For \(Q_1\), the Bernstein coefficient matrix is
		\[
		B_{Q_1}^{(0)}=
		\begin{pmatrix}
			60 & 60 & 58 & \frac{433}{8} & \frac{191}{4}\\[3mm]
			60 & \frac{483}{8} & \frac{353}{6} & \frac{1773}{32} & \frac{197}{4}\\[3mm]
			58 & \frac{2353}{40} & \frac{27791}{480} & \frac{1103}{20} & \frac{991}{20}\\[3mm]
			\frac{2163}{40} & \frac{2217}{40} & \frac{17681}{320} & \frac{17083}{320} & \frac{973}{20}\\[3mm]
			\frac{971}{20} & \frac{3231}{64} & \frac{49117}{960} & \frac{32193}{640} & \frac{14929}{320}\\[3mm]
			\frac{671}{16} & \frac{5691}{128} & \frac{17663}{384} & \frac{23631}{512} & \frac{2795}{64}\\[3mm]
			\frac{2233}{64} & \frac{9697}{256} & \frac{20543}{512} & \frac{5267}{128} & \frac{5087}{128}
		\end{pmatrix}.
		\]\\
		For \(Q_2\), the Bernstein coefficient matrix is
		\[
		B_{Q_2}^{(0)}=
		\begin{pmatrix}
			\frac{191}{4} & \frac{331}{8} & \frac{65}{2} & \frac{41}{2} & 4\\[3mm]
			\frac{197}{4} & \frac{1379}{32} & \frac{821}{24} & \frac{87}{4} & 4\\[3mm]
			\frac{991}{20} & \frac{879}{20} & \frac{17039}{480} & \frac{1853}{80} & \frac{99}{20}\\[3mm]
			\frac{973}{20} & \frac{14053}{320} & \frac{11621}{320} & \frac{3949}{160} & \frac{137}{20}\\[3mm]
			\frac{14929}{320} & \frac{27523}{640} & \frac{35107}{960} & \frac{8353}{320} & \frac{47}{5}\\[3mm]
			\frac{2795}{64} & \frac{21089}{512} & \frac{6925}{192} & \frac{1731}{64} & 12\\[3mm]
			\frac{5087}{128} & \frac{4907}{128} & \frac{17663}{512} & \frac{1729}{64} & \frac{223}{16}
		\end{pmatrix}.
		\]\\
		For \(Q_3\), the Bernstein coefficient matrix is
		\[
		B_{Q_3}^{(0)}=
		\begin{pmatrix}
			\frac{2233}{64} & \frac{9697}{256} & \frac{20543}{512} & \frac{5267}{128} & \frac{5087}{128}\\[3mm]
			\frac{891}{32} & \frac{2003}{64} & \frac{26303}{768} & \frac{18505}{512} & \frac{573}{16}\\[3mm]
			\frac{1629}{80} & \frac{773}{32} & \frac{53119}{1920} & \frac{9689}{320} & \frac{9899}{320}\\[3mm]
			\frac{131}{10} & \frac{2717}{160} & \frac{3319}{160} & \frac{15207}{640} & \frac{4023}{160}\\[3mm]
			\frac{137}{20} & \frac{829}{80} & \frac{277}{20} & \frac{67}{4} & \frac{92}{5}\\[3mm]
			\frac{5}{2} & \frac{39}{8} & \frac{29}{4} & \frac{37}{4} & \frac{21}{2}\\[3mm]
			1 & 1 & 1 & 1 & 1
		\end{pmatrix}.
		\]\\
		For \(Q_4\), the Bernstein coefficient matrix is
		\[
		B_{Q_4}^{(0)}=
		\begin{pmatrix}
			\frac{5087}{128} & \frac{4907}{128} & \frac{17663}{512} & \frac{1729}{64} & \frac{223}{16}\\[3mm]
			\frac{573}{16} & \frac{18167}{512} & \frac{25289}{768} & \frac{1727}{64} & \frac{127}{8}\\[3mm]
			\frac{9899}{320} & \frac{10109}{320} & \frac{58159}{1920} & \frac{8313}{320} & \frac{343}{20}\\[3mm]
			\frac{4023}{160} & \frac{16977}{640} & \frac{1051}{40} & \frac{47}{2} & \frac{341}{20}\\[3mm]
			\frac{92}{5} & \frac{401}{20} & \frac{409}{20} & \frac{303}{16} & \frac{59}{4}\\[3mm]
			\frac{21}{2} & \frac{47}{4} & \frac{49}{4} & \frac{93}{8} & \frac{19}{2}\\[3mm]
			1 & 1 & 1 & 1 & 1
		\end{pmatrix}.
		\]\\
		Hence, by the Bernstein enclosure principle on each sub-regions,
		\[
		\max_{Q_1}G_0\le \frac{483}{8},\qquad
		\max_{Q_2}G_0\le \frac{991}{20},\qquad
		\max_{Q_3}G_0\le \frac{5267}{128},\qquad
		\max_{Q_4}G_0\le \frac{5087}{128}.
		\]
		It is easy to see that all three bounds are less than $60$, except on the region $Q_1$.\\[2mm]
		For $G_{1/2}$,
		The Bernstein coefficient matrices for \(G_{1/2}\) on the four sub-regions are\\
		\[
		B_{Q_1}^{(1/2)}=
		\begin{pmatrix}
			60 & 60 & 58 & \frac{221}{4} & \frac{209}{4}\\[3mm]
			60 & \frac{487}{8} & \frac{359}{6} & \frac{1853}{32} & \frac{221}{4}\\[3mm]
			58 & \frac{239}{4} & \frac{9563}{160} & \frac{1177}{20} & \frac{2273}{40}\\[3mm]
			\frac{867}{16} & \frac{9087}{160} & \frac{37079}{640} & \frac{1861}{32} & \frac{18241}{320}\\[3mm]
			49 & \frac{8389}{160} & \frac{8753}{160} & \frac{3591}{64} & \frac{17909}{320}\\[3mm]
			\frac{1375}{32} & \frac{6035}{128} & \frac{38761}{768} & \frac{27123}{512} & \frac{6891}{128}\\[3mm]
			\frac{9375}{256} & \frac{10581}{256} & \frac{23265}{512} & \frac{12489}{256} & \frac{12921}{256}
		\end{pmatrix},
		\]\\[2mm]
		\[
		B_{Q_2}^{(1/2)}=
		\begin{pmatrix}
			\frac{209}{4} & \frac{197}{4} & 46 & 43 & 40\\[3mm]
			\frac{221}{4} & \frac{1683}{32} & \frac{1181}{24} & \frac{181}{4} & 40\\[3mm]
			\frac{2273}{40} & \frac{274}{5} & \frac{8267}{160} & \frac{3781}{80} & \frac{809}{20}\\[3mm]
			\frac{18241}{320} & \frac{1117}{20} & \frac{34127}{640} & \frac{15671}{320} & \frac{827}{20}\\[3mm]
			\frac{17909}{320} & \frac{17863}{320} & \frac{8661}{160} & \frac{1607}{32} & \frac{849}{20}\\[3mm]
			\frac{6891}{128} & \frac{28005}{512} & \frac{41407}{768} & \frac{6485}{128} & \frac{173}{4}\\[3mm]
			\frac{12921}{256} & \frac{13353}{256} & \frac{26721}{512} & \frac{6357}{128} & \frac{10995}{256}
		\end{pmatrix},
		\]\\
		\[
		B_{Q_3}^{(1/2)}=
		\begin{pmatrix}
			\frac{9375}{256} & \frac{10581}{256} & \frac{23265}{512} & \frac{12489}{256} & \frac{12921}{256}\\[3mm]
			\frac{3875}{128} & \frac{2273}{64} & \frac{15517}{384} & \frac{22833}{512} & \frac{3015}{64}\\[3mm]
			\frac{1511}{64} & \frac{9333}{320} & \frac{66401}{1920} & \frac{5037}{128} & \frac{3401}{80}\\[3mm]
			\frac{549}{32} & \frac{3627}{160} & \frac{18033}{640} & \frac{4235}{128} & \frac{11693}{320}\\[3mm]
			\frac{183}{16} & \frac{261}{16} & \frac{5093}{240} & \frac{128}{5} & \frac{2303}{80}\\[3mm]
			\frac{55}{8} & \frac{81}{8} & \frac{107}{8} & \frac{65}{4} & \frac{147}{8}\\[3mm]
			\frac{15}{4} & \frac{15}{4} & \frac{15}{4} & \frac{15}{4} & \frac{15}{4}
		\end{pmatrix},
		\]\\
		and
		\[
		B_{Q_4}^{(1/2)}=
		\begin{pmatrix}
			\frac{12921}{256} & \frac{13353}{256} & \frac{26721}{512} & \frac{6357}{128} & \frac{10995}{256}\\[3mm]
			\frac{3015}{64} & \frac{25407}{512} & \frac{9689}{192} & \frac{6229}{128} & \frac{5459}{128}\\[3mm]
			\frac{3401}{80} & \frac{29231}{640} & \frac{90677}{1920} & \frac{1479}{32} & \frac{13199}{320}\\[3mm]
			\frac{11693}{320} & \frac{25597}{640} & \frac{26877}{640} & \frac{13339}{320} & \frac{6071}{160}\\[3mm]
			\frac{2303}{80} & \frac{1279}{40} & \frac{8153}{240} & \frac{2729}{80} & \frac{2531}{80}\\[3mm]
			\frac{147}{8} & \frac{41}{2} & \frac{175}{8} & \frac{177}{8} & \frac{167}{8}\\[3mm]
			\frac{15}{4} & \frac{15}{4} & \frac{15}{4} & \frac{15}{4} & \frac{15}{4}
		\end{pmatrix}.
		\]\\
		Therefore, the correct Bernstein upper bounds are
		\[
		\max_{Q_1} G_{1/2}\le \frac{487}{8},\qquad
		\max_{Q_2} G_{1/2}\le \frac{18241}{320},
		\]
		\[
		\max_{Q_3} G_{1/2}\le \frac{12921}{256},\qquad
		\max_{Q_4} G_{1/2}\le \frac{26721}{512}.
		\]
		In particular, \(Q_2,Q_3,Q_4\) already give bounds strictly less than \(60\), whereas \(Q_1\) still needs further subdivision.
		We now subdivide the remaining region
		\[
		Q_1=\left[0,\frac12\right]\times\left[0,\frac12\right]
		\]
		into the four sub-regions
		\begin{align*}
			Q_{11}&=\left[0,\frac14\right]\times\left[0,\frac14\right],\qquad
			Q_{12}=\left[0,\frac14\right]\times\left[\frac14,\frac12\right],\\[2mm]
			Q_{13}&=\left[\frac14,\frac12\right]\times\left[0,\frac14\right],\qquad
			Q_{14}=\left[\frac14,\frac12\right]\times\left[\frac14,\frac12\right].
		\end{align*}
		To compute the Bernstein coefficients on each sub-regions, we again map each \(Q_{1j}\) affinely onto the unit square \([0, 1]\times[0, 1]\). If \((u,v)\in[0, 1]\times[0, 1]\), then the affine changes are
		\[
		\begin{aligned}
			Q_{11}:\quad & p=\frac{u}{4},\qquad x=\frac{v}{4},\\[1mm]
			Q_{12}:\quad & p=\frac{u}{4},\qquad x=\frac{1+v}{4},\\[1mm]
			Q_{13}:\quad & p=\frac{1+u}{4},\qquad x=\frac{v}{4},\\[1mm]
			Q_{14}:\quad & p=\frac{1+u}{4},\qquad x=\frac{1+v}{4}.
		\end{aligned}
		\]
		We use the  Bernstein basis
		\[
		\beta_i^{(6)}(u)=\binom{6}{i}u^i(1-u)^{6-i},
		\qquad
		\beta_j^{(4)}(v)=\binom{4}{j}v^j(1-v)^{4-j}.
		\]
		Thus, on each sub-regions,
		\[
		G_0\bigl(p(u),x(v)\bigr)
		=
		\sum_{i=0}^{6}\sum_{j=0}^{4} b_{ij}\,\beta_i^{(6)}(u)\beta_j^{(4)}(v).
		\]
		For \(Q_{11}\), the Bernstein coefficient matrix is
		\[
		B_{Q_{11}}^{(0)}=
		\begin{pmatrix}
			60 & 60 & \frac{119}{2} & \frac{3745}{64} & \frac{3649}{64}\\[3mm]
			60 & \frac{1923}{32} & \frac{5731}{96} & \frac{30117}{512} & \frac{14701}{256}\\[3mm]
			\frac{119}{2} & \frac{19103}{320} & \frac{456343}{7680} & \frac{300401}{5120} & \frac{293907}{5120}\\[3mm]
			\frac{18723}{320} & \frac{9411}{160} & \frac{600823}{10240} & \frac{594667}{10240} & \frac{1166459}{20480}\\[3mm]
			\frac{18257}{320} & \frac{117727}{2048} & \frac{235457}{4096} & \frac{4673237}{81920} & \frac{4595849}{81920}\\[3mm]
			\frac{28247}{512} & \frac{456537}{8192} & \frac{915641}{16384} & \frac{7290803}{131072} & \frac{3595905}{65536}\\[3mm]
			\frac{216721}{4096} & \frac{1756493}{32768} & \frac{7067867}{131072} & \frac{14115709}{262144} & \frac{13970539}{262144}
		\end{pmatrix}.
		\]\\
		For \(Q_{12}\), the Bernstein coefficient matrix is\\
		\[
		B_{Q_{12}}^{(0)}=
		\begin{pmatrix}
			\frac{3649}{64} & \frac{3553}{64} & \frac{107}{2} & \frac{815}{16} & \frac{191}{4}\\[3mm]
			\frac{14701}{256} & \frac{28687}{512} & \frac{20779}{384} & \frac{6609}{128} & \frac{97}{2}\\[3mm]
			\frac{293907}{5120} & \frac{287413}{5120} & \frac{417379}{7680} & \frac{33271}{640} & \frac{979}{20}\\[3mm]
			\frac{1166459}{20480} & \frac{35737}{640} & \frac{555073}{10240} & \frac{266233}{5120} & \frac{491}{10}\\[3mm]
			\frac{4595849}{81920} & \frac{4518461}{81920} & \frac{1099897}{20480} & \frac{1058427}{20480} & \frac{250657}{5120}\\[3mm]
			\frac{3595905}{65536} & \frac{7092817}{131072} & \frac{1732289}{32768} & \frac{1672755}{32768} & \frac{24847}{512}\\[3mm]
			\frac{13970539}{262144} & \frac{13825369}{262144} & \frac{6777527}{131072} & \frac{1642283}{32768} & \frac{391817}{8192}
		\end{pmatrix}.
		\]\\
		For \(Q_{13}\), the Bernstein coefficient matrix is\\
		\[
		B_{Q_{13}}^{(0)}=
		\begin{pmatrix}
			\frac{216721}{4096} & \frac{1756493}{32768} & \frac{7067867}{131072} & \frac{14115709}{262144} & \frac{13970539}{262144}\\[3mm]
			\frac{103733}{2048} & \frac{843419}{16384} & \frac{3405303}{65536} & \frac{3412453}{65536} & \frac{6778729}{131072}\\[3mm]
			\frac{245837}{5120} & \frac{401253}{8192} & \frac{1626395}{32768} & \frac{16363463}{327680} & \frac{16317991}{327680}\\[3mm]
			\frac{115337}{2560} & \frac{945367}{20480} & \frac{3849131}{81920} & \frac{121573}{2560} & \frac{7794061}{163840}\\[3mm]
			\frac{53541}{1280} & \frac{441089}{10240} & \frac{1083137}{24576} & \frac{3667901}{81920} & \frac{738539}{16384}\\[3mm]
			\frac{4917}{128} & \frac{40747}{1024} & \frac{503237}{12288} & \frac{171395}{4096} & \frac{347017}{8192}\\[3mm]
			\frac{2233}{64} & \frac{18629}{512} & \frac{77195}{2048} & \frac{158743}{4096} & \frac{161659}{4096}
		\end{pmatrix}.
		\]\\
		For \(Q_{14}\), the Bernstein coefficient matrix is\\
		\[
		B_{Q_{14}}^{(0)}=
		\begin{pmatrix}
			\frac{13970539}{262144} & \frac{13825369}{262144} & \frac{6777527}{131072} & \frac{1642283}{32768} & \frac{391817}{8192}\\[3mm]
			\frac{6778729}{131072} & \frac{841569}{16384} & \frac{3312949}{65536} & \frac{1611811}{32768} & \frac{193041}{4096}\\[3mm]
			\frac{16317991}{327680} & \frac{16272519}{327680} & \frac{8041031}{163840} & \frac{982247}{20480} & \frac{472639}{10240}\\[3mm]
			\frac{7794061}{163840} & \frac{780745}{16384} & \frac{3875909}{81920} & \frac{380553}{8192} & \frac{45999}{1024}\\[3mm]
			\frac{738539}{16384} & \frac{3717489}{81920} & \frac{5564449}{122880} & \frac{457597}{10240} & \frac{111193}{2560}\\[3mm]
			\frac{347017}{8192} & \frac{87811}{2048} & \frac{528599}{12288} & \frac{87407}{2048} & \frac{10677}{256}\\[3mm]
			\frac{161659}{4096} & \frac{164575}{4096} & \frac{83027}{2048} & \frac{5177}{128} & \frac{5087}{128}
		\end{pmatrix}.
		\]\\
		Hence, by the Bernstein enclosure principle on each sub-regions,
		\[
		\max_{Q_{11}}G_0\le \frac{1923}{32},\qquad
		\max_{Q_{12}}G_0\le \frac{14701}{256},
		\]
		\[
		\max_{Q_{13}}G_0\le \frac{7067867}{131072},\qquad
		\max_{Q_{14}}G_0\le \frac{13970539}{262144}.
		\]
		In particular,
		\[
		\frac{14701}{256}<60,\qquad
		\frac{7067867}{131072}<60,\qquad
		\frac{13970539}{262144}<60,
		\]
		so among these four sub-regions only \(Q_{11}\) still needs further refinement.\\
		For $G_{1/2}$,
		We now subdivide the remaining region
		\[
		Q_1=\left[0,\frac12\right]\times\left[0,\frac12\right]
		\]
		into the four sub-regions
		\begin{align*}
			Q_{11}&=\left[0,\frac14\right]\times\left[0,\frac14\right],\qquad
			Q_{12}=\left[0,\frac14\right]\times\left[\frac14,\frac12\right],\\[2mm]
			Q_{13}&=\left[\frac14,\frac12\right]\times\left[0,\frac14\right],\qquad
			Q_{14}=\left[\frac14,\frac12\right]\times\left[\frac14,\frac12\right].
		\end{align*}
		To compute the Bernstein coefficients on each sub-regions, we map each \(Q_{1j}\) affinely onto the unit square \([0, 1]\times[0, 1]\). If \((u,v)\in[0, 1]\times[0, 1]\), then the affine changes are
		\[
		\begin{aligned}
			Q_{11}:\quad & p=\frac{u}{4},\qquad x=\frac{v}{4},\\[1mm]
			Q_{12}:\quad & p=\frac{u}{4},\qquad x=\frac{1+v}{4},\\[1mm]
			Q_{13}:\quad & p=\frac{1+u}{4},\qquad x=\frac{v}{4},\\[1mm]
			Q_{14}:\quad & p=\frac{1+u}{4},\qquad x=\frac{1+v}{4}.
		\end{aligned}
		\]
		
		We use the  Bernstein basis
		\[
		\beta_i^{(6)}(u)=\binom{6}{i}u^i(1-u)^{6-i},
		\qquad
		\beta_j^{(4)}(v)=\binom{4}{j}v^j(1-v)^{4-j}.
		\]
		Thus, on each sub-regions,
		\[
		G_{1/2}\bigl(p(u),x(v)\bigr)
		=
		\sum_{i=0}^{6}\sum_{j=0}^{4} b_{ij}\,\beta_i^{(6)}(u)\beta_j^{(4)}(v).
		\]
		For \(Q_{11}\), the Bernstein coefficient matrix is
		\[
		B_{Q_{11}}^{(1/2)}=
		\begin{pmatrix}
			60 & 60 & \frac{119}{2} & \frac{1877}{32} & \frac{3685}{64}\\[3mm]
			60 & \frac{1927}{32} & \frac{5755}{96} & \frac{30377}{512} & \frac{14965}{256}\\[3mm]
			\frac{119}{2} & \frac{959}{16} & \frac{460049}{7680} & \frac{60951}{1024} & \frac{301427}{5120}\\[3mm]
			\frac{7491}{128} & \frac{151497}{2560} & \frac{1216171}{20480} & \frac{2426941}{40960} & \frac{1205039}{20480}\\[3mm]
			\frac{3655}{64} & \frac{296857}{5120} & \frac{448723}{7680} & \frac{2398229}{40960} & \frac{4783221}{81920}\\[3mm]
			\frac{56635}{1024} & \frac{461947}{8192} & \frac{1870411}{32768} & \frac{7531625}{131072} & \frac{3772097}{65536}\\[3mm]
			\frac{871215}{16384} & \frac{27885}{512} & \frac{7260375}{131072} & \frac{14688015}{262144} & \frac{14781075}{262144}
		\end{pmatrix}.
		\]\\
		For \(Q_{12}\), the Bernstein coefficient matrix is
		\[
		B_{Q_{12}}^{(1/2)}=
		\begin{pmatrix}
			\frac{3685}{64} & \frac{113}{2} & \frac{883}{16} & \frac{215}{4} & \frac{209}{4}\\[3mm]
			\frac{14965}{256} & \frac{29483}{512} & \frac{21679}{384} & \frac{7061}{128} & \frac{215}{4}\\[3mm]
			\frac{301427}{5120} & \frac{298099}{5120} & \frac{440081}{7680} & \frac{35959}{640} & \frac{8783}{160}\\[3mm]
			\frac{1205039}{20480} & \frac{478643}{8192} & \frac{236489}{4096} & \frac{290929}{5120} & \frac{142553}{2560}\\[3mm]
			\frac{4783221}{81920} & \frac{74531}{1280} & \frac{3550073}{61440} & \frac{146151}{2560} & \frac{287417}{5120}\\[3mm]
			\frac{3772097}{65536} & \frac{7556763}{131072} & \frac{470745}{8192} & \frac{1868191}{32768} & \frac{230457}{4096}\\[3mm]
			\frac{14781075}{262144} & \frac{14874135}{262144} & \frac{7446495}{131072} & \frac{1854975}{32768} & \frac{918825}{16384}
		\end{pmatrix}.
		\]\\
		For \(Q_{13}\), the Bernstein coefficient matrix is
		\[
		B_{Q_{13}}^{(1/2)}=
		\begin{pmatrix}
			\frac{871215}{16384} & \frac{27885}{512} & \frac{7260375}{131072} & \frac{14688015}{262144} & \frac{14781075}{262144}\\[3mm]
			\frac{418135}{8192} & \frac{430373}{8192} & \frac{3519553}{65536} & \frac{3578195}{65536} & \frac{7236881}{131072}\\[3mm]
			\frac{198975}{4096} & \frac{514779}{10240} & \frac{25399237}{491520} & \frac{17309657}{327680} & \frac{17596319}{327680}\\[3mm]
			\frac{93879}{2048} & \frac{488691}{10240} & \frac{4043133}{81920} & \frac{1039429}{20480} & \frac{1700161}{32768}\\[3mm]
			\frac{43919}{1024} & \frac{57539}{1280} & \frac{5750513}{122880} & \frac{3967591}{81920} & \frac{4080383}{81920}\\[3mm]
			\frac{20375}{512} & \frac{21513}{512} & \frac{541379}{12288} & \frac{188037}{4096} & \frac{389201}{8192}\\[3mm]
			\frac{9375}{256} & \frac{4989}{128} & \frac{84339}{2048} & \frac{177009}{4096} & \frac{184371}{4096}
		\end{pmatrix}.
		\]\\
		For \(Q_{14}\), the Bernstein coefficient matrix is
		\[
		B_{Q_{14}}^{(1/2)}=
		\begin{pmatrix}
			\frac{14781075}{262144} & \frac{14874135}{262144} & \frac{7446495}{131072} & \frac{1854975}{32768} & \frac{918825}{16384}\\[3mm]
			\frac{7236881}{131072} & \frac{1829343}{32768} & \frac{3680535}{65536} & \frac{1841759}{32768} & \frac{457911}{8192}\\[3mm]
			\frac{17596319}{327680} & \frac{17882981}{327680} & \frac{27119209}{491520} & \frac{142021}{2560} & \frac{1134653}{20480}\\[3mm]
			\frac{1700161}{32768} & \frac{4343089}{81920} & \frac{4413879}{81920} & \frac{2229949}{40960} & \frac{559207}{10240}\\[3mm]
			\frac{4080383}{81920} & \frac{838635}{16384} & \frac{1285453}{24576} & \frac{543941}{10240} & \frac{274061}{5120}\\[3mm]
			\frac{389201}{8192} & \frac{50291}{1024} & \frac{620141}{12288} & \frac{105507}{2048} & \frac{26703}{512}\\[3mm]
			\frac{184371}{4096} & \frac{191733}{4096} & \frac{99063}{2048} & \frac{12705}{256} & \frac{12921}{256}
		\end{pmatrix}.
		\]\\
		Hence, by the Bernstein enclosure principle on each sub-regions,
		\[
		\max_{Q_{11}}G_{1/2}\le \frac{1927}{32},\qquad
		\max_{Q_{12}}G_{1/2}\le \frac{301427}{5120},
		\]
		\[
		\max_{Q_{13}}G_{1/2}\le \frac{14781075}{262144},\qquad
		\max_{Q_{14}}G_{1/2}\le \frac{7446495}{131072}.
		\]
		
		In particular,
		\[
		\frac{301427}{5120}<60,\qquad
		\frac{14781075}{262144}<60,\qquad
		\frac{7446495}{131072}<60,
		\]
		so among these four sub-regions only \(Q_{11}\) still needs further refinement.
		Thus the only remaining region is
		\begin{align*}
			Q_{11}=\left[0,\frac14\right]\times\left[0,\frac14\right].
		\end{align*}
		The Bernstein coefficients of $G_{0}$ on $Q_{11}$ are
		\[
		B_{11}^{(0)}=
		\begin{pmatrix}
			60 & 60 & \frac{119}{2} & \frac{3745}{64} & \frac{3649}{64}\\[5mm]
			60 & \frac{1923}{32} & \frac{5731}{96} & \frac{30117}{512} & \frac{14701}{256}\\[5mm]
			\frac{119}{2} & \frac{19103}{320} & \frac{456343}{7680} & \frac{300401}{5120} & \frac{293907}{5120}\\[5mm]
			\frac{18723}{320} & \frac{9411}{160} & \frac{600823}{10240} & \frac{594667}{10240} & \frac{1166459}{20480}\\[5mm]
			\frac{18257}{320} & \frac{117727}{2048} & \frac{235457}{4096} & \frac{4673237}{81920} & \frac{4595849}{81920}\\[5mm]
			\frac{28247}{512} & \frac{456537}{8192} & \frac{915641}{16384} & \frac{7290803}{131072} & \frac{3595905}{65536}\\[5mm]
			\frac{216721}{4096} & \frac{1756493}{32768} & \frac{7067867}{131072} & \frac{14115709}{262144} & \frac{13970539}{262144}
		\end{pmatrix},
		\]\\
		whose maximum is
		\[
		\frac{1923}{32}=60+\frac{3}{32}.
		\]
		The Bernstein coefficients of $G_{1/2}$ on $Q_{11}$ are\\
		\[
		B_{11}^{(1/2)}=
		\begin{pmatrix}
			60 & 60 & \frac{119}{2} & \frac{1877}{32} & \frac{3685}{64}\\[5mm]
			60 & \frac{1927}{32} & \frac{5755}{96} & \frac{30377}{512} & \frac{14965}{256}\\[5mm]
			\frac{119}{2} & \frac{959}{16} & \frac{460049}{7680} & \frac{60951}{1024} & \frac{301427}{5120}\\[5mm]
			\frac{7491}{128} & \frac{151497}{2560} & \frac{1216171}{20480} & \frac{2426941}{40960} & \frac{1205039}{20480}\\[5mm]
			\frac{3655}{64} & \frac{296857}{5120} & \frac{448723}{7680} & \frac{2398229}{40960} & \frac{4783221}{81920}\\[5mm]
			\frac{56635}{1024} & \frac{461947}{8192} & \frac{1870411}{32768} & \frac{7531625}{131072} & \frac{3772097}{65536}\\[5mm]
			\frac{871215}{16384} & \frac{27885}{512} & \frac{7260375}{131072} & \frac{14688015}{262144} & \frac{14781075}{262144}
		\end{pmatrix},
		\]\\
		whose maximum is
		$
		{1927}/{32}=60+{7}/{32}.
		$
		This is still greater than $60$ on $Q_{11}$
		, which we solve directly.\\[2mm]
		On $Q_{11}$ we have
		$
		0\le p\le 1/4,\,  0\le x\le 1/4.
		$
		From the expansion of $G_{0}$, if we discard all negative terms, then
		\[
		\begin{aligned}
			G_0(p,x)&\le
			60-120p^2-48x^2+36px+24px^2+36p^2x+117p^2x^2+4x^3\\
			&\quad+60p^4+12p^3+36p^5x^2+36p^5x^3+12p^4x^3+48p^3x^4+36p^2x^4\\
			&\quad+p^6+9p^6x^2+12p^6x^4.
		\end{aligned}
		\]
		\begin{align*}
			24px^2 &\le 6px, &
			36p^2x &\le 9p^2, &
			117p^2x^2 &\le \frac{117}{16}p^2, &
			4x^3 &\le x^2,\\
			60p^4 &\le \frac{15}{4}p^2, &
			12p^3 &\le 3p^2, &
			36p^5x^2 &\le \frac{9}{16}p^2, &
			36p^5x^3 &\le \frac{9}{64}p^2,\\
			12p^4x^3 &\le \frac{3}{256}p^2, &
			48p^3x^4 &\le \frac{3}{64}p^2, &
			36p^2x^4 &\le \frac{9}{64}p^2,\\
			p^6 &\le \frac{1}{256}p^2, &
			9p^6x^2 &\le \frac{9}{256}p^2, &
			12p^6x^4 &\le \frac{3}{1024}p^2.
		\end{align*}
		Therefore
		\[
		G_0(p,x)\le 60-96p^2-47x^2+42px.
		\]
		Using
		\[
		42px\le 21p^2+21x^2,
		\]
		we get
		\[
		G_0(p,x)\le 60-75p^2-26x^2\le 60
		\qquad\text{on }Q_{11}.
		\]
		Now we handle $G_{1/2}$ in the same way. From its expansion,
		\[
		\begin{aligned}
			G_{1/2}(p,x)&\le
			60-120p^2-48x^2+84px+24px^2+147p^2x^2+40x^3+12p^4x\\
			&\quad+60p^4+30p^3+54p^5x^2+60p^5x^3+24p^4x^3+48p^3x^4+24p^3x^3\\
			&\quad+36p^2x^4+\frac{15}{4}p^6+21p^6x^2+12p^6x^4.
		\end{aligned}
		\]
		Using \(0\le p,x\le 1/4\), we obtain
		\[
		G_{1/2}(p,x)\le
		60-120p^2-48x^2+90px+\frac{11265}{512}p^2+10x^2.
		\]
		Thus
		\[
		G_{1/2}(p,x)\le 60-97p^2-38x^2+90px.
		\]
		Using
		\[
		90px\le 60p^2+\frac{135}{4}x^2,
		\]
		we obtain
		\[
		G_{1/2}(p,x)\le 60-37p^2-\frac{17}{4}x^2\le 60
		\qquad\text{on }Q_{11}.
		\]
		Combining all regions,
		\[
		G_0(p,x)\le 60,\qquad G_{1/2}(p,x)\le 60
		\]
		for all \((p,x)\in[0, 1]\times[0, 1]\). Since
		\[
		\max_{0\le t\le 1/2}G_{1}(p,x,t)=\max\left\{G_0(p,x),G_{1/2}(p,x)\right\},
		\]
		it follows that
		\[
		G_{1}(p,x,t)\le 60
		\qquad
		\text{for all }(p,x,t)\in [0,1]\times[0,1]\times\bigg[0,\frac12\bigg].
		\]
		Finally,
		\[
		G_{1}(0,0,t)=60
		\qquad \text{for every }0\le t\le \frac12.
		\]
		Hence, the maximum value is
		\[
		\max G_1(p_1,x,t)=60,
		\]
		and it is attained at
		$
		p_1=0,\,  x=0,\, 0\le t\le 1/2.
		$\\[2mm]
		Next, we deal with the case when $y=0,$ and $G_{2}(p_1,x, t):=H_{1}(p_1, x, 0, t)$
		\begin{align*}
			G_2(p_1,x,t):=&\,p_1^6(1-6t+21t^2+4t^3)
			+6p_1^4(1-p_1^2)(1-3t+10t^2)x \\[2mm]
			&+3p_1^2(1-p_1^2)\bigl(7-3p_1^2-4(2+p_1^2)t+8(7-p_1^2)t^2\bigr)x^2 \\[2mm]
			&+4(1-p_1^2)\bigl(1+7p_1^2+p_1^4+6(3-2p_1^2+2p_1^4)t\bigr)x^3 +12p_1^2(-1+p_1^2)^2x^4 \\[2mm]
			&+36(1-p_1^2)\bigl(p_1^2(1-2t)+2(1-p_1^2)x\bigr)(1-x^2) \\[2mm]
			&+12p_1(1-p_1^2)(1-x^2)\bigl(p_1^2(1-3t+12t^2)+3(1+p_1^2)x+4(2+p_1^2)tx\\[2mm]
			&+2(1-p_1^2)x^2\bigr).
		\end{align*}
		In this case we use an upper bound. We use the Bernstein method first on the whole region $(p_1, x, t)\in [0, 1]\times[0, 1]\times[0, 1/2].$
		Taking 
		$
		p=p_1,\, u=2t.
		$
		Since $0\le t\le 1/2,$ we have \(0\le u\le 1\). Define
		\begin{align*}
			H(p,x,u):=G_2\!\left(p,x,\frac{u}{2}\right).
		\end{align*}
		Then $H$ is a polynomial on the unit cube $[0, 1]\times[0, 1]\times[0, 1]$, of tri-degree $(6, 4, 3)$ in $(p, x, u)$.\\[2mm]
		Its full expansion is written as:
		\[
		\begin{aligned}
			H(p,x,u)&=\frac12 p^6u^3+6p^6u^2x^2-15p^6u^2x+\frac{21}{4}p^6u^2
			-24p^6ux^3+6p^6ux^2+9p^6ux\\
			&\quad-3p^6u+12p^6x^4-4p^6x^3+9p^6x^2-6p^6x+p^6
			+36p^5u^2x^2-36p^5u^2\\
			&\quad+24p^5ux^3-18p^5ux^2 -24p^5ux+18p^5u-24p^5x^4+36p^5x^3+36p^5x^2\\
			&\quad-36p^5x-12p^5 -48p^4u^2x^2+15p^4u^2x+48p^4ux^3-30p^4ux^2-9p^4ux\\
			&\quad+36p^4u -24p^4x^4-96p^4x^3+6p^4x^2+78p^4x-36p^4
			-36p^3u^2x^2\\
			&\quad+36p^3u^2+24p^3ux^3+18p^3ux^2-24p^3ux-18p^3u+48p^3x^4-60p^3x^2\\
			&\quad+12p^3
			+42p^2u^2x^2-60p^2ux^3 +24p^2ux^2-36p^2u+12p^2x^4+168p^2x^3\\
			&\quad-15p^2x^2-144p^2x+36p^2-48pux^3+48pux-24px^4-36px^3+24px^2\\
			&\quad+36px+36ux^3-68x^3+72x.
		\end{aligned}
		\]
		Now write $H$ in the  Bernstein basis:
		\[
		H(p,x,u)=\sum_{i=0}^{6}\sum_{j=0}^{4}\sum_{k=0}^{3}
		b_{ijk}\,\beta_i^{(6)}(p)\beta_j^{(4)}(x)\beta_k^{(3)}(u),
		\]
		where,
		\[
		\beta_r^{(n)}(s)=\binom{n}{r}s^r(1-s)^{n-r}.
		\]
		For each fixed $k=0, 1, 2, 3$, let
		\[
		B^{(k)}=(b_{ijk})_{0\le i\le 6,\;0\le j\le 4}.
		\]
		Rows correspond to $i=0,\dots,6$, and columns correspond to $j=0,\dots,4$.
		The Bernstein coefficients are as follows.\\[2mm]
		For $k=0$,
		\[
		B^{(0)}=
		\begin{pmatrix}
			0 & 18 & 36 & 37 & 4\\[3mm]
			0 & \frac{39}{2} & \frac{119}{3} & 42 & 4\\[3mm]
			\frac{12}{5} & 21 & \frac{1223}{30} & \frac{89}{2} & \frac{39}{5}\\[3mm]
			\frac{39}{5} & \frac{231}{10} & \frac{197}{5} & \frac{218}{5} & \frac{77}{5}\\[3mm]
			\frac{72}{5} & \frac{253}{10} & \frac{539}{15} & \frac{77}{2} & 22\\[3mm]
			16 & \frac{45}{2} & 27 & \frac{53}{2} & 18\\[3mm]
			1 & 1 & 1 & 1 & 1
		\end{pmatrix}.
		\]
		For $k=1$,
		\[
		B^{(1)}=
		\begin{pmatrix}
			0 & 18 & 36 & 40 & 16\\[3mm]
			0 & \frac{121}{6} & 41 & \frac{139}{3} & 16\\[3mm]
			\frac{8}{5} & \frac{323}{15} & \frac{769}{18} & \frac{493}{10} & \frac{91}{5}\\[3mm]
			\frac{51}{10} & \frac{223}{10} & \frac{2449}{60} & \frac{953}{20} & \frac{113}{5}\\[3mm]
			\frac{46}{5} & \frac{1339}{60} & \frac{3221}{90} & \frac{2449}{60} & \frac{127}{5}\\[3mm]
			10 & \frac{73}{4} & \frac{151}{6} & \frac{107}{4} & 19\\[3mm]
			0 & 0 & 0 & 0 & 0
		\end{pmatrix}.
		\]
		For $k=2$,
		\[
		B^{(2)}=
		\begin{pmatrix}
			0 & 18 & 36 & 43 & 28\\[3mm]
			0 & \frac{125}{6} & \frac{127}{3} & \frac{152}{3} & 28\\[3mm]
			\frac{4}{5} & \frac{331}{15} & \frac{269}{6} & \frac{1637}{30} & \frac{443}{15}\\[3mm]
			3 & \frac{221}{10} & \frac{216}{5} & \frac{267}{5} & \frac{163}{5}\\[3mm]
			\frac{32}{5} & \frac{1309}{60} & \frac{1157}{30} & \frac{937}{20} & \frac{101}{3}\\[3mm]
			8 & \frac{221}{12} & \frac{169}{6} & \frac{129}{4} & \frac{77}{3}\\[3mm]
			\frac34 & \frac34 & \frac34 & \frac34 & \frac34
		\end{pmatrix}.
		\]
		For $k=3$,\\
		\[
		B^{(3)}=
		\begin{pmatrix}
			0 & 18 & 36 & 46 & 40\\[3mm]
			0 & \frac{43}{2} & \frac{131}{3} & 55 & 40\\[3mm]
			0 & \frac{113}{5} & \frac{471}{10} & \frac{603}{10} & \frac{209}{5}\\[3mm]
			\frac32 & \frac{45}{2} & \frac{931}{20} & \frac{1217}{20} & \frac{227}{5}\\[3mm]
			6 & \frac{119}{5} & \frac{664}{15} & \frac{283}{5} & \frac{234}{5}\\[3mm]
			10 & 23 & 36 & 43 & 38\\[3mm]
			\frac{15}{4} & \frac{15}{4} & \frac{15}{4} & \frac{15}{4} & \frac{15}{4}
		\end{pmatrix}.
		\]\\
		By the Bernstein principle on $[0, 1]\times[0, 1]\times[0, 1]$, we have 
		\[
		H(p,x,u)\le \max_{0\le i\le 6,\;0\le j\le 4,\;0\le k\le 3} b_{ijk}.
		\]
		From the matrices above, the largest Bernstein coefficient is
		$
		{1217}/{20},
		$
		attained at the entry $(i,j,k)=(3,3,3)$.
		Therefore,
		\[
		H(p,x,u)\le \frac{1217}{20}
		\qquad\text{for all }(p,x,u)\in[0,1]\times[0, 1]\times[0, 1].
		\]
		Returning to $u=2t$, we obtain
		\[
		G_2(p_1,x,t)\le \frac{1217}{20}
		\qquad\text{for all }0\le p_1\le 1,\ 0\le x\le 1,\ 0\le t\le \frac12.
		\]
		Hence  Bernstein upper bound after one-step is
		\[
		\max G_{2}(p_1, x, t)\le \frac{1217}{20}=60.85.
		\]
		We now use the Bernstein method one more time to obtain a better upper bound by subdividing the $(p, x)$-region into four sub-regions.\\[1mm]
		Let
		\[
		u=2t,\qquad 0\le u\le 1,
		\]
		and keep
		\[
		H(p,x,u):=G_2\!\left(p,x,\frac{u}{2}\right),
		\qquad p=p_1.
		\]
		From the previous step, $H$ is a polynomial of tri-degree $(6, 4, 3)$ on $[0, 1]\times[0, 1]\times[0, 1]$, and the one-shot Bernstein bound gave
		\[
		H(p,x,u)\le \frac{1217}{20}=60.85.
		\]
		Now divide the $(p, x)$-square into four rectangles:
		\begin{align*}
			Q_1=\left[0,\frac12\right]\times\left[0,\frac12\right],\qquad
			Q_2=\left[0,\frac12\right]\times\left[\frac12,1\right],
			\\[3mm]
			Q_3=\left[\frac12,1\right]\times\left[0,\frac12\right],\qquad
			Q_4=\left[\frac12,1\right]\times\left[\frac12,1\right].
		\end{align*}
		On each $Q_{r}$, keep $u \in [0, 1]$, affine-reparametrize back to the unit cube, and then compute the Bernstein coefficients of tri-degree $(6, 4, 3)$.\\[2mm]
		For $Q_{1}$, set
		\[
		p=\frac{P}{2},\qquad x=\frac{X}{2},\qquad 0\le P,X,U\le 1.
		\]
		Then the transformed polynomial
		\[
		H\!\left(\frac{P}{2},\frac{X}{2},U\right)
		\]
		is written in the  Bernstein basis as
		\[
		H\!\left(\frac{P}{2},\frac{X}{2},U\right)
		=
		\sum_{i=0}^{6}\sum_{j=0}^{4}\sum_{k=0}^{3}
		b^{(Q_1)}_{ijk}\,
		\beta_i^{(6)}(P)\beta_j^{(4)}(X)\beta_k^{(3)}(U),
		\]
		where
		\[
		\beta_r^{(n)}(s)=\binom{n}{r}s^r(1-s)^{n-r}.
		\]
		For each fixed $k=0,1,2,3$, let
		\[
		B_{Q_1}^{(k)}=\bigl(b^{(Q_1)}_{ijk}\bigr)_{0\le i\le 6,\;0\le j\le 4}.
		\]
		For $k=0$,\\
		\[
		B_{Q_1}^{(0)}=
		\begin{pmatrix}
			0 & 9 & 18 & \frac{199}{8} & \frac{55}{2}\\[3mm]
			0 & \frac{75}{8} & \frac{113}{6} & \frac{837}{32} & 29\\[3mm]
			\frac{3}{5} & \frac{201}{20} & \frac{629}{32} & \frac{4351}{160} & \frac{151}{5}\\[3mm]
			\frac{15}{8} & \frac{111}{10} & \frac{6569}{320} & \frac{1793}{64} & \frac{311}{10}\\[3mm]
			\frac{15}{4} & \frac{3997}{320} & \frac{6861}{320} & \frac{2289}{80} & \frac{2533}{80}\\[3mm]
			\frac{95}{16} & \frac{1791}{128} & \frac{8531}{384} & \frac{14763}{512} & \frac{1015}{32}\\[3mm]
			\frac{505}{64} & \frac{3865}{256} & \frac{11471}{512} & \frac{905}{32} & \frac{3953}{128}
		\end{pmatrix}.
		\]\\
		For $k=1$,\\
		\[
		B_{Q_1}^{(1)}=
		\begin{pmatrix}
			0 & 9 & 18 & \frac{101}{4} & 29\\[3mm]
			0 & \frac{229}{24} & \frac{115}{6} & \frac{2591}{96} & 31\\[3mm]
			\frac{2}{5} & \frac{611}{60} & \frac{5797}{288} & \frac{679}{24} & \frac{3899}{120}\\[3mm]
			\frac{99}{80} & \frac{1753}{160} & \frac{40121}{1920} & \frac{9323}{320} & \frac{10697}{320}\\[3mm]
			\frac{49}{20} & \frac{4543}{384} & \frac{61801}{2880} & \frac{56641}{1920} & \frac{1013}{30}\\[3mm]
			\frac{123}{32} & \frac{405}{32} & \frac{5555}{256} & \frac{14995}{512} & \frac{4273}{128}\\[3mm]
			\frac{81}{16} & \frac{6705}{512} & \frac{10911}{512} & \frac{14445}{512} & \frac{4095}{128}
		\end{pmatrix}.
		\]\\
		For $k=2$,\\
		\[
		B_{Q_1}^{(2)}=
		\begin{pmatrix}
			0 & 9 & 18 & \frac{205}{8} & \frac{61}{2}\\[3mm]
			0 & \frac{233}{24} & \frac{39}{2} & \frac{2671}{96} & 33\\[3mm]
			\frac{1}{5} & \frac{619}{60} & \frac{9893}{480} & \frac{4707}{160} & \frac{4181}{120}\\[3mm]
			\frac{27}{40} & \frac{871}{80} & \frac{6837}{320} & \frac{973}{32} & \frac{2879}{80}\\[3mm]
			\frac{29}{20} & \frac{7343}{640} & \frac{873}{40} & \frac{11837}{384} & \frac{17491}{480}\\[3mm]
			\frac{39}{16} & \frac{577}{48} & \frac{8429}{384} & \frac{15683}{512} & \frac{3467}{96}\\[3mm]
			\frac{867}{256} & \frac{6285}{512} & \frac{11001}{512} & \frac{15135}{512} & \frac{8889}{256}
		\end{pmatrix}.
		\]\\
		For $k=3$,\\
		\[
		B_{Q_1}^{(3)}=
		\begin{pmatrix}
			0 & 9 & 18 & 26 & 32\\[3mm]
			0 & \frac{79}{8} & \frac{119}{6} & \frac{917}{32} & 35\\[3mm]
			0 & \frac{209}{20} & \frac{10129}{480} & \frac{1223}{40} & \frac{149}{4}\\[3mm]
			\frac{3}{16} & \frac{1743}{160} & \frac{14039}{640} & \frac{5093}{160} & \frac{12409}{320}\\[3mm]
			\frac{3}{4} & \frac{1827}{160} & \frac{10819}{480} & \frac{2607}{80} & \frac{6349}{160}\\[3mm]
			\frac{55}{32} & \frac{1547}{128} & \frac{17641}{768} & \frac{16827}{512} & \frac{5109}{128}\\[3mm]
			\frac{735}{256} & \frac{3237}{256} & \frac{11745}{512} & \frac{8277}{256} & \frac{10005}{256}
		\end{pmatrix}.
		\]\\
		Hence the largest Bernstein coefficient on $Q_1$ is
		\[
		\max B_{Q_1}=\frac{5109}{128}\approx 39.9140625,
		\]
		attained at the entry $(i,j,k)=(5,4,3)$.\\[2mm]
		For $Q_2$, set
		\[
		p=\frac{P}{2},\qquad x=\frac{1+X}{2},\qquad 0\le P,X,U\le 1.
		\]
		Then the transformed polynomial
		\[
		H\!\left(\frac{P}{2},\frac{1+X}{2},U\right)
		\]
		is written in the  Bernstein basis as
		\[
		H\!\left(\frac{P}{2},\frac{1+X}{2},U\right)
		=
		\sum_{i=0}^{6}\sum_{j=0}^{4}\sum_{k=0}^{3}
		b^{(Q_2)}_{ijk}\,
		\beta_i^{(6)}(P)\beta_j^{(4)}(X)\beta_k^{(3)}(U),
		\]
		where,
		\[
		\beta_r^{(n)}(s)=\binom{n}{r}s^r(1-s)^{n-r}.
		\]
		For each fixed $k=0,1,2,3$, let
		\[
		B_{Q_2}^{(k)}=\bigl(b^{(Q_2)}_{ijk}\bigr)_{0\le i\le 6,\;0\le j\le 4}.
		\]
		For $k=0$,
		\[
		B_{Q_2}^{(0)}=
		\begin{pmatrix}
			\frac{55}{2} & \frac{241}{8} & \frac{57}{2} & \frac{41}{2} & 4\\[3mm]
			29 & \frac{1019}{32} & \frac{725}{24} & \frac{87}{4} & 4\\[3mm]
			\frac{151}{5} & \frac{5313}{160} & \frac{5069}{160} & \frac{1853}{80} & \frac{99}{20}\\[3mm]
			\frac{311}{10} & \frac{10939}{320} & \frac{10517}{320} & \frac{3949}{160} & \frac{137}{20}\\[3mm]
			\frac{2533}{80} & \frac{2777}{80} & \frac{2153}{64} & \frac{8353}{320} & \frac{47}{5}\\[3mm]
			\frac{1015}{32} & \frac{17717}{512} & \frac{6481}{192} & \frac{1731}{64} & 12\\[3mm]
			\frac{3953}{128} & \frac{2143}{64} & \frac{16799}{512} & \frac{1729}{64} & \frac{223}{16}
		\end{pmatrix}.
		\]\\
		For $k=1$,
		\[
		B_{Q_2}^{(1)}=
		\begin{pmatrix}
			29 & \frac{131}{4} & 33 & 28 & 16\\[3mm]
			31 & \frac{3361}{96} & \frac{845}{24} & \frac{355}{12} & 16\\[3mm]
			\frac{3899}{120} & \frac{4403}{120} & \frac{53177}{1440} & \frac{1489}{48} & \frac{331}{20}\\[3mm]
			\frac{10697}{320} & \frac{12071}{320} & \frac{73097}{1920} & \frac{2063}{64} & \frac{353}{20}\\[3mm]
			\frac{1013}{30} & \frac{24341}{640} & \frac{110947}{2880} & \frac{21151}{640} & \frac{305}{16}\\[3mm]
			\frac{4273}{128} & \frac{19189}{512} & \frac{9749}{256} & \frac{1061}{32} & \frac{325}{16}\\[3mm]
			\frac{4095}{128} & \frac{18315}{512} & \frac{18651}{512} & \frac{16461}{512} & \frac{333}{16}
		\end{pmatrix}.
		\]\\
		For $k=2$,
		\[
		B_{Q_2}^{(2)}=
		\begin{pmatrix}
			\frac{61}{2} & \frac{283}{8} & \frac{75}{2} & \frac{71}{2} & 28\\[3mm]
			33 & \frac{3665}{96} & \frac{965}{24} & \frac{449}{12} & 28\\[3mm]
			\frac{4181}{120} & \frac{19327}{480} & \frac{4061}{96} & \frac{9373}{240} & \frac{1703}{60}\\[3mm]
			\frac{2879}{80} & \frac{6651}{160} & \frac{13981}{320} & \frac{6453}{160} & \frac{583}{20}\\[3mm]
			\frac{17491}{480} & \frac{80743}{1920} & \frac{1417}{32} & \frac{78887}{1920} & \frac{7219}{240}\\[3mm]
			\frac{3467}{96} & \frac{63895}{1536} & \frac{4213}{96} & \frac{15755}{384} & \frac{1475}{48}\\[3mm]
			\frac{8889}{256} & \frac{20421}{512} & \frac{21573}{512} & \frac{20325}{512} & \frac{7803}{256}
		\end{pmatrix}.
		\]\\
		For $k=3$,
		\[
		B_{Q_2}^{(3)}=
		\begin{pmatrix}
			32 & 38 & 42 & 43 & 40\\[3mm]
			35 & \frac{1323}{32} & \frac{1085}{24} & \frac{181}{4} & 40\\[3mm]
			\frac{149}{4} & \frac{1757}{40} & \frac{4589}{96} & \frac{3781}{80} & \frac{809}{20}\\[3mm]
			\frac{12409}{320} & \frac{1829}{40} & \frac{31823}{640} & \frac{15671}{320} & \frac{827}{20}\\[3mm]
			\frac{6349}{160} & \frac{1871}{40} & \frac{24439}{480} & \frac{1607}{32} & \frac{849}{20}\\[3mm]
			\frac{5109}{128} & \frac{24045}{512} & \frac{39295}{768} & \frac{6485}{128} & \frac{173}{4}\\[3mm]
			\frac{10005}{256} & \frac{11733}{256} & \frac{25569}{512} & \frac{6357}{128} & \frac{10995}{256}
		\end{pmatrix}.
		\]\\
		Hence the largest Bernstein coefficient on $Q_2$ is
		\[
		\max B_{Q_2}=\frac{39295}{768}\approx 51.1653646,
		\]
		attained at the entry $(i,j,k)=(5,2,3)$.\\[2mm]
		For $Q_3$, set
		\[
		p=\frac{1+P}{2},\qquad x=\frac{X}{2},\qquad 0\le P,X,U\le 1.
		\]
		Then the transformed polynomial
		\[
		H\!\left(\frac{1+P}{2},\frac{X}{2},U\right)
		\]
		is written in the  Bernstein basis as
		\[
		H\!\left(\frac{1+P}{2},\frac{X}{2},U\right)
		=
		\sum_{i=0}^{6}\sum_{j=0}^{4}\sum_{k=0}^{3}
		b^{(Q_3)}_{ijk}\,
		\beta_i^{(6)}(P)\beta_j^{(4)}(X)\beta_k^{(3)}(U),
		\]
		where
		\[
		\beta_r^{(n)}(s)=\binom{n}{r}s^r(1-s)^{n-r}.
		\]
		For each fixed $k=0,1,2,3$, let
		\[
		B_{Q_3}^{(k)}=\bigl(b^{(Q_3)}_{ijk}\bigr)_{0\le i\le 6,\;0\le j\le 4}.
		\]
		For $k=0$,\\
		\[
		B_{Q_3}^{(0)}=
		\begin{pmatrix}
			\frac{505}{64} & \frac{3865}{256} & \frac{11471}{512} & \frac{905}{32} & \frac{3953}{128}\\[3mm]
			\frac{315}{32} & \frac{1037}{64} & \frac{17351}{768} & \frac{14197}{512} & \frac{1923}{64}\\[3mm]
			\frac{185}{16} & \frac{1353}{80} & \frac{42611}{1920} & \frac{16897}{640} & \frac{4531}{160}\\[3mm]
			\frac{25}{2} & \frac{535}{32} & \frac{3321}{160} & \frac{15297}{640} & \frac{405}{16}\\[3mm]
			\frac{237}{20} & \frac{1187}{80} & \frac{2107}{120} & \frac{3133}{160} & \frac{817}{40}\\[3mm]
			\frac{17}{2} & \frac{81}{8} & \frac{23}{2} & \frac{199}{16} & \frac{51}{4}\\[3mm]
			1 & 1 & 1 & 1 & 1
		\end{pmatrix}.
		\]\\
		For $k=1$,\\
		\[
		B_{Q_3}^{(1)}=
		\begin{pmatrix}
			\frac{81}{16} & \frac{6705}{512} & \frac{10911}{512} & \frac{14445}{512} & \frac{4095}{128}\\[3mm]
			\frac{201}{32} & \frac{3465}{256} & \frac{1339}{64} & \frac{13895}{512} & \frac{3917}{128}\\[3mm]
			\frac{293}{40} & \frac{2609}{192} & \frac{114647}{5760} & \frac{48391}{1920} & \frac{6769}{240}\\[3mm]
			\frac{627}{80} & \frac{2067}{160} & \frac{34463}{1920} & \frac{14161}{640} & \frac{1569}{64}\\[3mm]
			\frac{73}{10} & \frac{5281}{480} & \frac{10483}{720} & \frac{1675}{96} & \frac{2291}{120}\\[3mm]
			5 & \frac{113}{16} & \frac{215}{24} & \frac{167}{16} & \frac{45}{4}\\[3mm]
			0 & 0 & 0 & 0 & 0
		\end{pmatrix}.
		\]\\
		For $k=2$,\\
		\[
		B_{Q_3}^{(2)}=
		\begin{pmatrix}
			\frac{867}{256} & \frac{6285}{512} & \frac{11001}{512} & \frac{15135}{512} & \frac{8889}{256}\\[3mm]
			\frac{555}{128} & \frac{9623}{768} & \frac{16145}{768} & \frac{14587}{512} & \frac{12799}{384}\\[3mm]
			\frac{1679}{320} & \frac{1499}{120} & \frac{38339}{1920} & \frac{10193}{384} & \frac{9879}{320}\\[3mm]
			\frac{939}{160} & \frac{381}{32} & \frac{2891}{160} & \frac{3003}{128} & \frac{4329}{160}\\[3mm]
			\frac{463}{80} & \frac{1651}{160} & \frac{297}{20} & \frac{1501}{80} & \frac{1711}{80}\\[3mm]
			\frac{35}{8} & \frac{335}{48} & \frac{19}{2} & \frac{93}{8} & \frac{313}{24}\\[3mm]
			\frac{3}{4} & \frac{3}{4} & \frac{3}{4} & \frac{3}{4} & \frac{3}{4}
		\end{pmatrix}.
		\]\\
		For $k=3$,\\
		\[
		B_{Q_3}^{(3)}=
		\begin{pmatrix}
			\frac{735}{256} & \frac{3237}{256} & \frac{11745}{512} & \frac{8277}{256} & \frac{10005}{256}\\[3mm]
			\frac{515}{128} & \frac{845}{64} & \frac{8797}{384} & \frac{16281}{512} & \frac{153}{4}\\[3mm]
			\frac{343}{64} & \frac{4369}{320} & \frac{14347}{640} & \frac{19491}{640} & \frac{11633}{320}\\[3mm]
			\frac{213}{32} & \frac{2199}{160} & \frac{13553}{640} & \frac{17899}{640} & \frac{10559}{320}\\[3mm]
			\frac{119}{16} & \frac{1033}{80} & \frac{4453}{240} & \frac{473}{20} & \frac{439}{16}\\[3mm]
			\frac{55}{8} & \frac{81}{8} & \frac{107}{8} & \frac{65}{4} & \frac{147}{8}\\[3mm]
			\frac{15}{4} & \frac{15}{4} & \frac{15}{4} & \frac{15}{4} & \frac{15}{4}
		\end{pmatrix}.
		\]\\
		Hence the largest Bernstein coefficient on $Q_3$ is
		\[
		\max B_{Q_3}=\frac{10005}{256}\approx 39.08203125,
		\]
		attained at the entry $(i,j,k)=(0,4,3)$.\\[2mm]
		For $Q_4$, set
		\[
		p=\frac{1+P}{2},\qquad x=\frac{1+X}{2},\qquad 0\le P,X,U\le 1.
		\]
		Then the transformed polynomial
		\[
		H\!\left(\frac{1+P}{2},\frac{1+X}{2},U\right)
		\]
		is written in the  Bernstein basis as
		\[
		H\!\left(\frac{1+P}{2},\frac{1+X}{2},U\right)
		=
		\sum_{i=0}^{6}\sum_{j=0}^{4}\sum_{k=0}^{3}
		b^{(Q_4)}_{ijk}\,
		\beta_i^{(6)}(P)\beta_j^{(4)}(X)\beta_k^{(3)}(U),
		\]
		where,
		\[
		\beta_r^{(n)}(s)=\binom{n}{r}s^r(1-s)^{n-r}.
		\]
		For each fixed $k=0,1,2,3$, let
		\[
		B_{Q_4}^{(k)}=\bigl(b^{(Q_4)}_{ijk}\bigr)_{0\le i\le 6,\;0\le j\le 4}.
		\]
		For $k=0$,\\
		\[
		B_{Q_4}^{(0)}=
		\begin{pmatrix}
			\frac{3953}{128} & \frac{2143}{64} & \frac{16799}{512} & \frac{1729}{64} & \frac{223}{16}\\[3mm]
			\frac{1923}{64} & \frac{16571}{512} & \frac{24473}{768} & \frac{1727}{64} & \frac{127}{8}\\[3mm]
			\frac{4531}{160} & \frac{19351}{640} & \frac{11467}{384} & \frac{8313}{320} & \frac{343}{20}\\[3mm]
			\frac{405}{16} & \frac{17103}{640} & \frac{132}{5} & \frac{47}{2} & \frac{341}{20}\\[3mm]
			\frac{817}{40} & \frac{3403}{160} & \frac{314}{15} & \frac{303}{16} & \frac{59}{4}\\[3mm]
			\frac{51}{4} & \frac{209}{16} & \frac{51}{4} & \frac{93}{8} & \frac{19}{2}\\[3mm]
			1 & 1 & 1 & 1 & 1
		\end{pmatrix}.
		\]\\
		For $k=1$,\\
		\[
		B_{Q_4}^{(1)}=
		\begin{pmatrix}
			\frac{4095}{128} & \frac{18315}{512} & \frac{18651}{512} & \frac{16461}{512} & \frac{333}{16}\\[3mm]
			\frac{3917}{128} & \frac{17441}{512} & \frac{4451}{128} & \frac{7973}{256} & \frac{341}{16}\\[3mm]
			\frac{6769}{240} & \frac{19971}{640} & \frac{183779}{5760} & \frac{1161}{40} & \frac{337}{16}\\[3mm]
			\frac{1569}{64} & \frac{17219}{640} & \frac{52811}{1920} & \frac{8123}{320} & \frac{779}{40}\\[3mm]
			\frac{2291}{120} & \frac{9953}{480} & \frac{15217}{720} & \frac{9463}{480} & \frac{317}{20}\\[3mm]
			\frac{45}{4} & \frac{193}{16} & \frac{293}{24} & \frac{183}{16} & \frac{19}{2}\\[3mm]
			0 & 0 & 0 & 0 & 0
		\end{pmatrix}.
		\]\\
		For $k=2$,\\
		\[
		B_{Q_4}^{(2)}=
		\begin{pmatrix}
			\frac{8889}{256} & \frac{20421}{512} & \frac{21573}{512} & \frac{20325}{512} & \frac{7803}{256}\\[3mm]
			\frac{12799}{384} & \frac{58631}{1536} & \frac{31015}{768} & \frac{29465}{768} & \frac{11609}{384}\\[3mm]
			\frac{9879}{320} & \frac{67583}{1920} & \frac{14315}{384} & \frac{34331}{960} & \frac{9307}{320}\\[3mm]
			\frac{4329}{160} & \frac{19617}{640} & \frac{649}{20} & \frac{1257}{40} & \frac{4227}{160}\\[3mm]
			\frac{1711}{80} & \frac{1921}{80} & \frac{507}{20} & \frac{3957}{160} & \frac{343}{16}\\[3mm]
			\frac{313}{24} & \frac{347}{24} & \frac{91}{6} & \frac{713}{48} & \frac{317}{24}\\[3mm]
			\frac{3}{4} & \frac{3}{4} & \frac{3}{4} & \frac{3}{4} & \frac{3}{4}
		\end{pmatrix}.
		\]\\
		For $k=3$,\\
		\[
		B_{Q_4}^{(3)}=
		\begin{pmatrix}
			\frac{10005}{256} & \frac{11733}{256} & \frac{25569}{512} & \frac{6357}{128} & \frac{10995}{256}\\[3mm]
			\frac{153}{4} & \frac{22887}{512} & \frac{9353}{192} & \frac{6229}{128} & \frac{5459}{128}\\[3mm]
			\frac{11633}{320} & \frac{27041}{640} & \frac{29447}{640} & \frac{1479}{32} & \frac{13199}{320}\\[3mm]
			\frac{10559}{320} & \frac{24337}{640} & \frac{26429}{640} & \frac{13339}{320} & \frac{6071}{160}\\[3mm]
			\frac{439}{16} & \frac{1249}{40} & \frac{8089}{240} & \frac{2729}{80} & \frac{2531}{80}\\[3mm]
			\frac{147}{8} & \frac{41}{2} & \frac{175}{8} & \frac{177}{8} & \frac{167}{8}\\[3mm]
			\frac{15}{4} & \frac{15}{4} & \frac{15}{4} & \frac{15}{4} & \frac{15}{4}
		\end{pmatrix}.
		\]\\
		Hence the largest Bernstein coefficient on $Q_4$ is
		\[
		\max B_{Q_4}=\frac{25569}{512}\approx 49.939453125,
		\]
		attained at the entry $(i,j,k)=(0,2,3)$.
		Hence, by the Bernstein enclosure principle on each sub-regions,
		\[
		H(p,x,u)\le
		\max\left\{
		\frac{5109}{128},
		\frac{39295}{768},
		\frac{10005}{256},
		\frac{25569}{512}
		\right\}
		=
		\frac{39295}{768}.
		\]
		Therefore
		\[
		H(p,x,u)\le \frac{39295}{768}
		\quad\text{for all }(p,x,u)\in[0,1]\times[0, 1]\times[0, 1].
		\]
		Returning to $u=2t$, we obtain the improved upper bound
		\[
		G_2(p_1,x,t)\le \frac{39295}{768}
		\qquad\text{for all }0\le p_1\le 1,\ 0\le x\le 1,\ 0\le t\le 1/2.
		\]
		Thus, after one further subdivision into four sub-regions, the Bernstein upper bound improves from
		$
		{1217}/{20}=60.85
		$
		to
		$
		{39295}/{768}\approx 51.1653646.
		$ \\[2mm]
		Combining the cases of $y=0$ and $y=1,$ we obtain 
		\begin{align*}
			|H_{3}(1)| \le \frac{60}{8640}=\frac{1}{144}
		\end{align*}
		Next, we show that the extremal function for $H_{3}(1)$ is obtained by taking $w(z)=z^3,$ we have 
		\[
		1+\frac{z f''(z)}{f'(z)}=1+z^3+t z^6.
		\]
		Hence
		\begin{align*}
			\frac{f''(z)}{f'(z)}&=z^2+t z^5.
		\end{align*}
		Therefore
		\[
		\frac{d}{dz}\bigl(\log f'(z)\bigr)=z^2+t z^5.
		\]
		Integrating, we obtain
		\[
		\log f'(z)=\frac{z^3}{3}+\frac{t z^6}{6}+C.
		\]
		Using the normalization \(f'(0)=1\), we get \(C=0\). Thus
		\[
		f'(z)=\exp\!\left(\frac{z^3}{3}+\frac{t z^6}{6}\right).
		\]
		Integrating once again and using \(f(0)=0\), we obtain
		\[
		f(z)=\int_0^z \exp\!\left(\frac{\xi^3}{3}+\frac{t \xi^6}{6}\right)\,d\xi.
		\]
		Hence the extremal function is given by
		\[
		f(z)=\int_0^z \exp\!\left(\frac{\xi^3}{3}+\frac{t \xi^6}{6}\right)\,d\xi,
		\qquad t=\frac{m}{n}.
		\]
		Now
		\[
		\exp\!\left(\frac{z^3}{3}+\frac{t z^6}{6}\right)
		=
		\sum_{k,j\ge 0}\frac{1}{k!\,j!}
		\left(\frac{z^3}{3}\right)^k
		\left(\frac{t z^6}{6}\right)^j.
		\]
		Hence
		\[
		f(z)=
		\sum_{k,j\ge 0}
		\frac{t^j}{3^k 6^j k!j!}\,
		\frac{z^{3k+6j+1}}{3k+6j+1}.
		\]
		The expansion of $f$ is 
		\[
		f(z)=
		z+\frac{z^4}{12}
		+\frac{(1+3t)z^7}{126}
		+\frac{(1+9t)z^{10}}{1620}
		+\cdots.
		\]
		From the expansion, we obtain
		\[
		a_2=0,\qquad a_3=0,\qquad a_4=\frac{1}{12},\qquad a_5=0.
		\]
		Therefore
		\[
		H_3(1)=-\frac{1}{144},
		\]
		Thus, we have
		\[
		|H_3(1)|=\frac{1}{144}.
		\]
		This proves the sharpness of the result.
	\end{pf}
	
	\vspace{3mm}
	
	\noindent\textbf{Compliance of Ethical Standards:}\\
	
	\noindent\textbf{Conflict of interest.} The authors declare that there is no conflict  of interest regarding the publication of this paper.
	\vspace{1mm}
	
	\noindent\textbf{Data availability statement.}  Data sharing is not applicable to this article as no datasets were generated or analyzed during the current study.
	\vspace{1mm}
	
	\noindent\textbf{Authors contributions.} Both the authors have made equal contributions in reading, writing, and preparing the manuscript.\vspace{1mm}
	
	\noindent\textbf{Acknowledgment:} 
	The second named author acknowledges financial support from the Council of Scientific and Industrial Research (CSIR), Government of India, through a CSIR Fellowship.


\begin{thebibliography}{99}
		
		
		\bibitem{ChoiKimSugawa2007}
		\textsc{J. H. Choi}, \textsc{Y. C. Kim} and \textsc{T. Sugawa},
		A general approach to the Fekete--Szeg\H{o} problem,
		\textit{J. Math. Soc. Japan} \textbf{59}(3) (2007), 707--727.
		
		
		\bibitem{Goodman1983}
		\textsc{A. W. Goodman},
		\textit{Univalent Functions, Vol.~I},
		Mariner Publishing Co., Inc., Tampa, FL, 1983.
		
		
		\bibitem{KwonLeckoSim2018}
		\textsc{O. S. Kwon}, \textsc{A. Lecko} and \textsc{Y. J. Sim},
		On the fourth coefficient of functions in the Carath\'eodory class,
		\textit{Comput. Methods Funct. Theory} \textbf{18}(2) (2018), 307--314.
		
		\bibitem{LiberaZlotkiewicz1982}
		\textsc{R. J. Libera} and \textsc{E. J. Z{\l}otkiewicz},
		Early coefficients of the inverse of a regular convex function,
		\textit{Proc. Amer. Math. Soc.} \textbf{85}(2) (1982), 225--230.
		
		\bibitem{MaMinda1992}
		\textsc{W. C. Ma} and \textsc{D. Minda},
		A unified treatment of some special classes of univalent functions,
		in \textit{Proceedings of the Conference on Complex Analysis (Tianjin, 1992)}, 157--169.
		
		\bibitem{Pommerenke1966}
		\textsc{C. Pommerenke},
		On the coefficients and Hankel determinants of univalent functions,
		\textit{J. London Math. Soc.} \textbf{41} (1966), 111--122.
		
		\bibitem{ProkhorovSzynal1981}
		D.~V.~Prokhorov and J.~Szynal,
		Inverse coefficients for $(\alpha,\beta)$-convex functions,
		\textit{Annales Universitatis Mariae Curie-Sk{\l}odowska, Sectio A}
		\textbf{35} (1981), no.~15, 125--143.
		
	\end{thebibliography}
\end{document}